 \newtheorem{thm}{Theorem}[section]
 \newtheorem{lem}[thm]{Lemma}
 \newtheorem{prop}[thm]{Proposition}
 \theoremstyle{definition}
 \newtheorem{rem}[thm]{Remark}
 \numberwithin{equation}{section}
\DeclareMathOperator*{\esssup}{ess\,sup}
\DeclareMathOperator*{\essinf}{ess\,inf}
\newcommand{\be}{\begin{equation}}
\newcommand{\ee}{\end{equation}}
\newcommand{\bq}{\begin{eqnarray}}
\newcommand{\eq}{\end{eqnarray}}
\newcommand{\half}{\frac{1}{2}}
    \def\ed{{\,\stackrel{\frak {D}}{=}\,}}
    \def\bbd{{\mathbb D}}
    \def\calF{{\mathcal F}}
    \def\calN{{\mathcal N}}
    \def\calT{{\mathcal T}}
    \def\bbr{{\mathbb R}}
    \def\bbe{{\mathbb E}}
    \def\bbp{{\mathbb P}}
    \def\bbn{{\mathbb N}}
    \def\bbz{{\mathbb Z}}
    \def\ed{\;{\stackrel{\frak {D}}{=}}\;}
    \def\d{{d}}
    \def\g{{g}}
    \def\h{{h}}
    \def\k{{k}}
    \def\s{{s}}
    \def\L{{L}}
    \def\N{{N}}
    \def\S{{S}}
    \def\T{{T}}
  \definecolor{Red}{rgb}{1.00, 0.00, 0.00}
    \definecolor{DRed}{rgb}{0.7, 0.3, 0.00}
    \definecolor{Green}{rgb}{0.2, 0.5, 0.2}
    \definecolor{Blue}{rgb}{0.00, 0.00, 1.00}
    \definecolor{PaleGrey}{rgb}{.6, .6, .6}
\title{Change-point detection for L\'evy processes}
\author{{Jos\'e E. Figueroa-L\'opez}}
\address{
Department of Mathematics\\
Washington University in St. Louis, St. Louis, MO 63130, USA}
\email{{\tt figueroa@math.wustl.edu}}
\thanks{The first author's research was supported in part by the NSF grants DMS-1561141 and DMS-1613016.}
\author{Sveinn \'Olafsson}
\address{Department of Statistics and Applied Probability, University of California, Santa Barbara, CA 93106, USA}
\email{{\tt olafsson@pstat.ucsb.edu}}
\begin{document}
\maketitle

\begin{abstract}
	
	
	Since the work of Page in the 1950s, the problem of detecting an abrupt change in the distribution of stochastic processes has received a great deal of attention. In particular, a deep connection has been established between Lorden's minimax approach to change-point detection and the widely used CUSUM procedure, first for discrete-time processes, and subsequently for some of their continuous-time counterparts. However, results for processes with jumps are still scarce, while the practical importance of such processes has escalated since the turn of the century. 
	In this work we consider the problem of detecting a change in the distribution of continuous-time processes with independent and stationary increments, i.e.\ L\'evy processes, and our main result shows that CUSUM is indeed optimal in Lorden's sense. 
	This is the most natural continuous-time analogue of the seminal work of Moustakides \cite{Moustakides} for 
	sequentially observed random variables that are assumed to be i.i.d.\ 
	before and after the change-point. 
	From a practical perspective, 
	the approach we adopt is appealing as it consists in approximating the continuous-time problem by a suitable sequence of change-point problems with equispaced sampling points, and for which a CUSUM procedure is shown to be optimal.
	
	
\vspace{0.2 cm}
\noindent\textbf{AMS 2000 subject classification:} Primary 62L10, 60G51; Secondary 60G40, 62C20.

\vspace{0.1 cm}
\noindent\textbf{Keywords:} Change-point, sequential detection, optimal stopping, CUSUM, L\'evy processes.

\end{abstract}

\section{Introduction}

Quickest detection is the problem of detecting, with as little delay as possible, a change in the probability distribution of a sequence of random measurements, and it has a wide range of applications in various branches of science and engineering, such as signal processing, supply chain management, {cybersecurity}, and finance (see \cite{Olympia} and references therein). The main result of this paper is an extension of a well known discrete-time quickest detection result of Moustakides \cite{Moustakides}, to an important class of continuous-time stochastic processes with jumps: L\'evy processes.

In the discrete-time setting, the change-point problem involves a sequence $(X_n)_{n\geq 1}$ of random observations whose statistical properties change at some unknown point in time $\tau$. In the simplest case, the observations $X_1,X_2,\dots,X_{\tau-1}$ are assumed to be independently drawn from one distribution, while $X_{\tau},X_{\tau+1},\dots$ are independently drawn from a different distribution.
The objective is then to detect the change-point $\tau$ as soon as possible, and the set of feasible detection strategies corresponds to the set of (extended real-valued) stopping times with respect to the observed sequence, with the understanding that a stopping time $T$ decides that the change-point $\tau$ has occurred at time $k$ when $T=k$. Naturally, the frequency of false alarms needs to be taken into account, so the design of detection procedures typically involves optimizing a trade-off between two types of performance indices, one quantifying the delay between the time a change occurs and {the time} it is detected, i.e., 
the random variable {$(T-\tau+1)^+$},
and the other being a measure of the frequency of false alarms, i.e., events of the type $\{T<\tau\}$. 

There are two main formulations of this optimization problem. 
The first of these is a Bayesian formulation in which the change-point is endowed with a prior distribution, usually a geometric distribution in discrete-time models or an exponential distribution in continuous-time models. This framework was first proposed with {a} linear delay penalty by Kolmogorov and Shiryayev \cite{Shir}, where the expected delay $\bbe(T-\tau+1)^+$ was to be minimized subject to an upper bound on the probability of a false alarm, $\bbp(T<\tau)$. 
{In applications there is typically limited information about the distribution of the change-point, and the second formulation is a more conservative minimax approach,}
first proposed in the linear delay penalty case by Lorden \cite{Lorden}, in which the change-point is considered to be deterministic and unknown.
In this formulation, the delay penalty is a worst-case measure of delay, taken over all possible realizations of the observations leading up to the change-point, and over all possible values of the change-point (see Eq.\ (\ref{Lorden}) for details), and false alarms are constrained {by a} lower bound on the mean time between such events. 


In this work we are concerned with the latter formulation,
which, whenever it can be optimized, tends to give rise to the CUSUM (cumulative sum) stopping rule, first proposed by Page \cite{Page} as a continuous inspection scheme in the 1950s. CUSUM is one of the most widely used detection schemes in practice, and is based on the first time the accumulated likelihood (or log-likelihood) breaches a certain barrier (see Eqs.\ (\ref{cs1})-(\ref{cs2})).
For a sequence of independent observations as described above, the asymptotic optimality of CUSUM, as the mean time between false alarms tends to infinity, was shown by Lorden \cite{Lorden} in 1971, and fifteen years later, Moustakides \cite{Moustakides} proved its optimality for any finite bound on the false alarm rate. Similar procedures were subsequently applied in \cite{HVPoor} with Lorden's linear criterion replaced by exponentially penalized detection delays. 

For continuous-time processes, the optimality of the CUSUM procedure for detecting a change in the drift of a Brownian motion was shown independently by several authors (see \cite{Beibel}, \cite{Moustakides2}, and \cite{Shir2}). More generally, its optimality for detecting a change in the drift of It\^o processes was shown in \cite{Moustakides2}, and, {more recently}, in \cite{Chronopoulou}, it was finally established for arbitrary processes with continuous paths. In both cases the optimality was established under a convenient modification of Lorden's criterion, based on the Kullback-Leibler divergence, that coincides with Lorden's criterion when the quadratic variation of the process is proportional to time. 


For continuous-time processes with jumps, the current body of work is much more limited. In fact, to our knowledge the only available optimality result is for a proportional change in the intensity of doubly stochastic Poisson processes \cite{ElKaroui}, with Lorden's expected delay criterion replaced by the expected number of jumps until detection, {motivated by applications in actuarial science.
This result includes the important case of a change in the jump intensity of a homogeneous Poisson process, for which the delay measure proposed in \cite{ElKaroui} coincides with Lorden's criterion.} We also mention a recent nonparametric result {for jump processes} \cite{German}, based on the empirical tail integral of the jump-measure, and a separate stream of literature concerning change-point detection for Poisson processes in the Bayesian setting described above (see \cite{Bayraktar}, \cite{Peskir}, \cite{Plummer}, and references therein). 

The proofs of the aforementioned results do not appear to extend in an obvious way to more general jump processes. For instance, a fundamental step in the methodology of \cite{Moustakides2} for continuous processes, as well as in \cite{ElKaroui} for doubly stochastic Poisson processes, is to use stochastic calculus to characterize the CUSUM performance functions (i.e., the average-run-length as described in Remark \ref{logThc}-(i) below) in terms of the solutions of certain differential equations, or delayed differential equations (DDE). In particular, the proof in \cite{ElKaroui} uses scale functions from the theory of L\'evy processes to deal with the aforementioned DDEs, and resolves a long-standing discontinuity problem in the methodology of Moustakides (cf.\ \cite[Sec.\ 6.4.4]{Olympia}) using the concept of a discontinuous local time, both of which may prove difficult to extend to more general jump processes (see \cite{ElKaroui} for a further discussion, and \cite{Albrecher} for another application of scale function in sequential testing).

In this work we show that CUSUM is indeed optimal for detecting a change in the statistical properties of processes with independent and {stationary increments}, i.e.\ L\'evy processes. {This result is in some sense the most natural continuous-time counterpart} of the discrete-time problem considered by Moustakides in \cite{Moustakides}. 
In addition to being of theoretical interest, it also has practical implications, as L\'evy processes form a tractable and flexible family of stochastic models with jumps, that is {well suited} to model random phenomena that exhibit erratic and discontinuous behavior. {Indeed, since the turn of the century, L\'evy processes} have found numerous applications in areas as diverse as finance and insurance, physics, and biology.  

Our approach to the problem has two main steps. 
First, we consider a continuous-time problem where the change-point is assumed to take values in a discrete set, and for which the methodology of Moustakides \cite{Moustakides} can be adapted. We show that a discretized version of the CUSUM procedure is optimal in this case, which is of practical interest in its own right, for instance in financial markets where the change-point may be assumed to occur at the beginning of a new business day. 
The second step consists in increasing the sampling frequency, and using a limiting procedure to establish the optimality of CUSUM for the continuous-time detection problem with no restriction on the value of the change-point. 
This latter part of the proof is novel and relatively 
general; it relies on little more than {standard} 
pathwise properties of L\'evy processes, and, unlike the approach in \cite{ElKaroui}, does not require separate analysis depending on whether there is a rise or a decline in the jump intensity, in addition to including changes in more general L\'evy processes. 
The trade-off is that one does not obtain as a byproduct semi-explicit expressions for the CUSUM performance functions, that are at the center of the methodology developed in \cite{ElKaroui,Moustakides2} and described above. On the other hand, we believe that our approach can be extended in various important ways, such as to incorporate exponential delay penalties (cf.\ \cite{HVPoor}), and to derive optimal stopping times for more general point processes, such as Hawkes processes. This is left for further research.


The remainder of this paper has two main sections. Section \ref{SetupCP} introduces the probabilistic framework and the notation needed to study change-point detection for L\'evy processes. Section \ref{secLorden} then reviews Lorden's change-point problem for discrete-time processes, as introduced in \cite{Lorden}, before defining the analogous continuous-time problem and presenting our  optimal change-detection results for L\'evy processes. Proofs of ancillary results are deferred to an appendix. 

\section{Probabilistic framework}\label{SetupCP}
Let $X^{0}:=(X^{0}_t)_{t\geq 0}$ and $X^{1}:=(X^{1}_t)_{t\geq 0}$ be L\'evy processes on $\bbr$, defined on the same complete filtered probability space $(\widetilde{\Omega},\widetilde{\calF},{(\widetilde\calF_t)_{t\geq 0}},\widetilde{\bbp})$, with generating triplets $(\sigma^{(0)},b^{(0)},\nu^{(0)})$ and $(\sigma^{(1)},b^{(1)},\nu^{(1)})$ relative to the truncation function ${\bf 1}_{\{|x|\leq 1\}}$ (see \cite[Sec.\ 8]{Sato}).
{In other words, $X^0$ and $X^1$ have independent and stationary increments, and trajectories that are almost surely c\`adl\`ag (right-continuous with left limits).
It is assumed that $(\sigma^{(0)},b^{(0)},\nu^{(0)})\neq(\sigma^{(1)},b^{(1)},\nu^{(1)})$, and that} we continuously observe the stochastic process $X^{(\tau)}:=(X^{(\tau)}_t)_{t\geq 0}$, defined by 
\begin{align*}
	X_t^{(\tau)}=\left\{\begin{array}{ll}  X_t^0, \quad&  t<\tau,\medskip\\
	X_t^1-X_{\tau}^1+X^0_{\tau^-}, \quad& t\geq\tau,
 \end{array}\right.
\end{align*}
where {$\tau\in\bar\bbr_0^+:=[0,\infty)\cup\{\infty\}$}, 
referred to as the change-point of the process, is assumed to be unknown and deterministic. It follows that $dX^{(\tau)}_t=dX^{0}_{t}{\bf 1}_{\{t<\tau\}}+dX^{1}_{t}{\bf 1}_{\{t\geq\tau\}}$, so the pre-change and post-change distributions of the process are determined by $X^0$ and $X^1$.
We also set $X^{(\infty)}:=X^{0}$ and $X^{(0)}:=X^{1}$, which correspond, respectively, to the cases of a change-point at time zero and no change-point. 
Finally, observe that for any $\tau\in(0,\infty)$, 
$X^{(\tau)}$ is almost surely continuous at $\tau$,
and {$(X^{(\tau)}_{t\wedge\tau})_{t\geq 0}$} and $(X^{(\tau)}_{t+\tau}-X^{(\tau)}_{\tau})_{t\geq{}0}$ are independent (stopped) L\'evy processes with the same generating triplets as $X^{0}$ and $X^1$, respectively. 

Change-point detection revolves around detecting the change-point $\tau$ as quickly and as reliably as possible, using sequential detection schemes, that is to say, a set of admissible stopping times. 
In order to formalize a framework for this problem, let us introduce 
the space of c\`adl\`ag functions $\omega:[0,\infty)\to\bbr$, denoted by $\Omega=\bbd([0,\infty),\bbr)$,
along with the canonical process $X:=(X_t)_{t\geq 0}$, defined by
\begin{align}\label{Xcan}
X_{t}(\omega):=\omega(t), \quad(\omega,t)\in\Omega\times{[0,\infty),}
\end{align}
and let $\calF_{t}$ (resp.\ $\calF$) be the smallest $\sigma$-field that makes $({X}_{s})_{s\leq{}t}$ (resp.\ $({X}_{s})_{s\geq{}0}$) measurable. As customary, let $\calF_{t^{-}}:=\sigma({\cup}_{s<t}\calF_{t})$, 
for $t>0$, and $\calF_{0^-}\equiv \calF_0$, where $\calF_0$ is the trivial $\sigma$-algebra. 
Next, for each $\tau\in\bar\bbr_{0}^+$, define the probability measure $\bbp_{\tau}$ on the space $(\Omega,\calF)$ 
as
\begin{align}\label{bbptau}
\bbp_{\tau}(A):=\widetilde{\bbp}(\tilde\omega\in\widetilde\Omega:X^{(\tau)}_{\cdot}(\tilde\omega)\in A), \quad A\in\calF,
\end{align}
and denote by $\bbe_{\tau}$ the expected value w.r.t.\ to $\bbp_{\tau}$. 
Finally, make $(\Omega,\calF,(\calF_t)_{t\geq 0},\bbp_{\tau})$ a complete filtered probability space by including $\calN_{\tau}$ in $\calF_0$, where $\calN_{\tau}$ {contains} the null sets of the measure $\bbp_{\tau}$ in $\calF$. Under assumptions (i)-(iii) below, $\calN_{\tau}$ is the same set for each $\tau\in\bar{\bbr}_0^+$.

Note that for the canonical process ${X}$, Borel sets $B_1,\dots,B_n$, and time points $t_1,\dots,t_n$, we have
\begin{align*}
\bbp_{\tau}({X}_{t_{1}}\in B_{1},\dots,  X_{t_{n}}\in B_{n})
=\widetilde{\bbp}(X^{(\tau)}_{t_{1}}\in B_{1},\dots, X^{(\tau)}_{t_{n}}\in B_{n}),
\end{align*}
so the distribution of $X$ under $\bbp_{\tau}$ is the same as the distribution of $X^{(\tau)}$ under $\widetilde\bbp$. In particular, under $\bbp_{\tau}$ with $\tau\in{(0,\infty)}$, 
the processes $({X}_{t\wedge\tau})_{t\geq 0}$ and $({X}_{t+\tau}-{X}_{\tau})_{t\geq{}0}$ are independent (stopped) L\'evy processes with {generating} triplets $(\sigma^{(0)},b^{(0)},\nu^{(0)})$ and $(\sigma^{(1)},b^{(1)},\nu^{(1)})$, respectively.
 The process $X$ can therefore be referred to as the observed process, 
with the {data-generating} probability measure unknown.

It is {also} assumed that the probability measures $\bbp_{\infty}$ and $\bbp_{0}$ induced on the path space $\Omega$ by the L\'evy processes $X^{(\infty)}$ and $X^{(0)}$, sometimes termed the \emph{in-control} and \emph{out-of-control} measures, are mutually absolutely continuous. Equivalently, it is assumed that their generating triplets satisfy the following conditions (see \cite[Thm.\ 33.1]{Sato}):
\begin{enumerate}
	\item[(i)]
	The Brownian volatilities are equal: $\sigma^{(0)}=\sigma^{(1)}$.
	\item[(ii)]
	The L\'evy measures $\nu^{(0)}$ and $\nu^{(1)}$ are equivalent and satisfy
	\begin{align}\label{levyCond} 
	\int_{\bbr_0}\big(e^{{\varphi(x)}/{2}}-1\big)^2\nu^{(0)}(dx)<\infty,
	\end{align}
	where $e^{\varphi(x)}=d\nu^{(1)}/d\nu^{(0)}$ is the Radon-Nikod\'ym derivative of $\nu^{(1)}$ w.r.t.\ $\nu^{(0)}$. 
	\item[(iii)] 
	The drift parameters $b^{(0)}$ and $b^{(1)}$ are such that 
	\begin{align}\label{driftCond}
	b^{(1)}-b^{(0)}-\int_{|x|\leq 1}x(\nu^{(1)}-\nu^{(0)})(dx)=\alpha(\sigma^{(0)})^2,
	\end{align}
	for some $\alpha\in\bbr$, and $\alpha=0$ if $\sigma^{(0)}=0$. 
\end{enumerate}
Under these conditions, each member of the family of measures $\{\bbp_{\tau},\,\tau\in\bar\bbr_0^+\}$ is absolutely continuous with respect to $\bbp_{\infty}$. It follows that for each ${\tau\geq 0}$ the likelihood ratio process
\begin{align}\label{Lst}
L_{t}^{(\tau)}:=\frac{\left.d\bbp_{\tau}\right|_{\calF_{t}}}{\left.d\bbp_{\infty}\right|_{\calF_{t}}},\quad t\geq 0,
\end{align}
is well defined, with $L_t^{(\tau)}=1$ for $t\leq{\tau}$, while for $t\geq {{\tau}}$ it can be written in terms of the likelihood ratios $L^{(0)}_{\tau}$ and $L^{(0)}_{t}$ (see the appendix for a justification): 
\begin{align}\label{LstRatio}
L_t^{(\tau)} = \Big.\frac{d\bbp_0|_{\calF_{t}}}{d\bbp_{\infty}|_{\calF_{t}}}\Big/\frac{d\bbp_0|_{\calF_{\tau}}}{d\bbp_{\infty}|_{\calF_{\tau}}}
=\frac{L_{t}^{(0)}}{L_{\tau}^{(0)}},\quad t\geq \tau.
\end{align}
Moreover, the likelihood ratio process
\begin{align}\label{LstU} 
L_{t}^{(0)} = e^{U_t},\quad t\geq 0,
\end{align}
is a $\bbp_{\infty}$-martingale, and the log-likelihood ratio $U_t$ 
takes the following form (see 
\cite[Thm.\ 33.2]{Sato}),
\begin{align}\label{Ut}
U_t 
&=\alpha X_t^c - \half\alpha^2(\sigma^{(0)})^2t-\alpha b^{(0)}t 
+ \lim_{\epsilon\downarrow 0}\Big(\sum_{0\leq s{\leq} t:\,|\Delta X_s|>\epsilon}\varphi(\Delta X_s)-t\int_{|x|>\epsilon}(e^{\varphi(x)}-1)\nu^{(0)}(dx)\Big),
\end{align}
where $(X_t^c)_{t\geq 0}$ is the continuous part of $X$ (that is, a Brownian motion with drift), 
and $\varphi$ and $\alpha$ are as in Eqs.\ (\ref{levyCond})-(\ref{driftCond}). 
We remark that $(U_t)_{t\geq 0}$ is a L\'evy process under $\bbp_{\infty}$ and $\bbp_{0}$, 
with {generating} triplets given explicitly in terms of those of $X$ under $\bbp_{\infty}$ and $\bbp_{0}$ (see \cite[Sec.\ 33]{Sato}). In particular, the L\'evy measures are given by {$\nu^{(0)}\circ\varphi^{-1}$} and {$\nu^{(1)}\circ\varphi^{-1}$},
respectively.
Furthermore, under the measures $\bbp_{\tau}$, with $\tau\in{(0,\infty)}$, 
the processes $(U_{t\wedge\tau})_{t\geq 0}$ and $(U_{t+\tau}-U_{\tau})_{t\geq 0}$ are independent {(stopped)} L\'evy processes, with the same {generating} triplets as $(U_t)_{t\geq 0}$ under $\bbp_{\infty}$ and $\bbp_0$, {respectively}.

As mentioned above, a natural class of detection strategies corresponds to the set of stopping times with respect to the filtration generated by the observed process. Hence, for each $\gamma>0$ we define 
\begin{align}\label{Tgamma}
\calT_{\gamma}:=\{T\in\calT: \bbe_{\infty}(T)\geq{}\gamma\},
\end{align} 
where $\calT$ is the set of stopping times on $\Omega$ with respect to $(\calF_{t})_{t\geq 0}$, taking values in ${\bar\bbr_0^+}$. Also, for $\Delta>0$, let $\calT(\Delta)$ and $\calT_{\gamma}(\Delta)$ denote the corresponding subsets of ${\Delta\bar\bbz_0^+}$-valued stopping times\footnote{Let $\bbz_0^+:=\{0,1,\dots\}$ and $\bar\bbz_0^+:=\bbz_0^+\cup\{\infty\}$.}. 
Since $\bbp_{\infty}$ is a probability measure under which $\tau=\infty$, i.e.\ under which there is no change-point, the purpose of the constraint $\bbe_{\infty}(T)\geq\gamma$ in (\ref{Tgamma}) is to serve as a lower bound on the mean time between false alarms (i.e.\ premature detection). 
Such a condition is needed since, as explained in the introduction, change-point detection involves a trade-off between the delay until detection {(i.e., the time while a change goes undetected)} and the frequency of false alarms. This {naturally} gives rise to an optimization problem,
and since our strategy to solve the continuous-time problem consists {in} approximating it by a sequence of discrete-time problems, the following section {sets out with a} 
discussion on Lorden's change-point problem in discrete time, {and then introduces} 
the corresponding problem for continuous-time stochastic processes.

\section{Lorden's change-point problem}\label{secLorden}

The minimax approach to change-point detection, wherein the change-point is assumed to be deterministic but unknown, was originally proposed by Lorden \cite{Lorden} in 1971. In {this} setting, detection delay is penalized {\emph{linearly}} via its worst-case expected value, and the frequency of false alarms is constrained by a lower bound on the expected time between such events.
In what follows we make this precise for discrete-time processes, and recall the seminal result of Moustakides \cite{Moustakides}, before moving on to the continuous-time case and presenting our optimal change-detection result for L\'evy processes. 

\subsection{Discrete time}\label{secLorden1}
To define Lorden's change-point problem for discrete-time stochastic processes, we need the following notation:
\begin{enumerate}
	\item[(i)] On the sample space 
	$\hat\Omega:={\bbr^{\bbn}}$, consider the canonical process $\hat{X}_{\k}(\hat\omega):=\hat\omega(\k)$, for $\hat{\omega}\in\hat{\Omega}$ and $k\geq 1$, and the natural filtration $(\hat\calF_{\k})_{k\geq 0}$ defined by $\hat\calF_0:=\{\hat{\Omega},\emptyset\}$, $\hat\calF_{k}:=\sigma(\hat {X}_{1},\dots,\hat { X}_{k})$, for $k\geq 1$, and $\hat\calF_{\infty}:=\sigma(\hat { X}_{k}:k\geq{}1)$.
	\item[(ii)] For equivalent probability distributions $Q_{0}$ and $Q_{1}$ on $\bbr$, let $(\hat\bbp_{\k})_{k\geq 1}$ be a family of probability measures on $\hat\Omega$ such that, under $\hat\bbp_{\k}$,  $(\hat{X}_{i})_{i\geq 1}$ are independent with $\hat{ X}_{1},\dots,\hat{ X}_{k-1}$ having distribution $Q_{0}$ and $\hat{ X}_{\k},\hat{ X}_{\k+1},\dots$ having distribution $Q_{1}$. Let $\hat\bbp_{\infty}$ be a probability measure under which  $(\hat X_i)_{i\geq 1}$ is i.i.d.\ with distribution $Q_0$, and denote by $\hat\bbe_{k}$ (resp.\ $\hat\bbe_{\infty}$) the expected value w.r.t.\ $\hat\bbp_{k}$ (resp.\ $\hat\bbp_{\infty}$). 
	\item[(iii)] Let $\hat\calT$ be the set of {$\bar{\bbz}_0^+$-valued} stopping times $\hat\T$ on $\hat\Omega$ with respect to the filtration $(\hat\calF_{k})_{k\geq 0}$, and, for $\gamma>0$, let $\hat\calT_{\gamma}:=\{\hat\T\in\hat\calT:\hat\bbe_{\infty}(\hat\T)\geq\gamma\}$ be the subset of stopping times satisfying a lower bound on the mean time between false alarms.
\end{enumerate}
In this setting, 
$\hat\bbp_k$ is a probability measure under which {the change-point $\hat \tau$ is equal to $k$,} 
that is, under which $k$ is the first instant {that} the sequence is governed by the {post-change} distribution $Q_1$. In particular, $\hat\bbp_1$ is a measure under which the sequence is i.i.d.\ with distribution $Q_1$ (i.e., $\hat\tau=1$) and $\hat\bbp_{\infty}$ is a measure under which the sequence is i.i.d.\ with distribution $Q_0$ (i.e., $\hat\tau=\infty$).

As a set of detection strategies, we consider all stopping times $\hat\T\in\hat\calT$, and the performance of a given stopping time is evaluated in the sense of Lorden \cite{Lorden}, with a linear penalty on detection delay\footnote{The essential supremum of a random variable $X$, defined on a {generic} probability space $(\Omega,\calF,\bbp)$, is defined as ${\esssup}\,X:={ {\esssup}_{\omega\in\Omega}X(\omega)=\inf\{u\in\bbr: \bbp(X\geq{}u)=0\}}$, with the convention that $\inf\emptyset=\infty$.}, 
\begin{align}\label{Lorden}
\hat\d(\hat T):= \sup_{k\geq 1} {\esssup}\, \hat{\bbe}_{k}\big(\big.\big(\hat{T}-(k-1)\big)^{+}\big|\hat \calF_{k-1}\big).
\end{align}
That is, detection delay is penalized via its worst-case expected value under each of the measures $\hat\bbp_k$, where the worst case is taken over all realizations of the process up to (and including) time $k-1$. The desire to make $\hat\d(\hat T)$ small must be balanced with a constraint on the rate of false alarms, so Lorden's change-point detection problem is defined as the following optimization problem:
\begin{align}\label{DiscTimeLorden}
\hat\Pi_{\gamma}^d(Q_{0},Q_{1}):=\inf_{\hat{T}\in\hat\calT_{\gamma}} \hat\d(\hat\T),
\end{align}
where $\gamma>0$, and the infimum is taken over all stopping times $\hat\T$ that satisfy the constraint $\hat\bbe_{\infty}(\hat\T)\geq\gamma$ on the mean time between false alarms.

The solution to this optimization problem is the widely used CUSUM procedure, as stated in the following theorem, originally due to Moustakides \cite{Moustakides}. His methodology is based on reframing the problem so that it can be solved using the techniques of Markovian optimal stopping theory. The key step is to establish a convenient lower bound on the detection delay of a generic stopping time, and then {proving} that the lower bound is attained by CUSUM stopping times. 

\begin{thm}\label{MoustakidesThm}
	\rm{[Moustakides, 1986]} 
	Let $h\geq 0$ 
	and define the CUSUM stopping time by 
	\begin{align}\label{cs1}
	\hat\T_h^c:=\inf\{k\geq 0:\hat\S_k\geq h\},
	\end{align}
	where 
	$\hat\S_0=0$ and 
	\begin{align}\label{cs2}
	\hat\S_k  := \max_{1\leq j\leq k}\prod_{i=j}^{k}\hat\L(\hat { X}_i)=\max(\hat\S_{k-1},1)\hat\L(\hat { X}_k),\quad k\geq 1,
	\end{align}
	where $\hat L:={dQ_1}/{dQ_0}$ is the Radon-Nikod\'ym derivative of $Q_1$ with respect to $Q_0$. Then
	$\hat\T_h^c$ solves the optimization problem (\ref{Lorden})-(\ref{DiscTimeLorden}), with $\gamma=\hat\bbe_{\infty}(\hat\T_h^c)$.
\end{thm}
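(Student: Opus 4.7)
The plan is to follow the Markovian optimal-stopping route alluded to in the paragraph preceding the theorem. Writing $\gamma:=\hat{\bbe}_\infty[\hat{T}_h^c]$, I would split the argument into three pieces: (i) the trivial lower bound $\hat{d}(\hat{T})\ge\hat{\bbe}_1[\hat{T}]$, obtained from the $k=1$ term of the supremum defining $\hat{d}$ (where $\hat{\calF}_0$ is trivial); (ii) the identification $\hat{d}(\hat{T}_h^c)=\hat{\bbe}_1[\hat{T}_h^c]$; and (iii) the constrained lower bound $\hat{\bbe}_1[\hat{T}]\ge\hat{\bbe}_1[\hat{T}_h^c]$ for every $\hat{T}\in\hat{\calT}_\gamma$. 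Combining (i)--(iii) yields $\hat{d}(\hat{T})\ge\hat{d}(\hat{T}_h^c)$ on $\hat{\calT}_\gamma$.

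Step (ii) rests on the fact that the recursion $\hat{S}_k=\max(\hat{S}_{k-1},1)\hat{L}(\hat{X}_k)$ makes $(\hat{S}_k)_{k\ge 0}$ a time-homogeneous Markov chain under $\hat{\bbp}_\infty$ and, on the post-change horizon $\{k\ge k_0\}$, under $\hat{\bbp}_{k_0}$. On $\{\hat{T}_h^c\ge k\}$ we have $\hat{S}_{k-1}<h$, and under $\hat{\bbp}_k$ the conditional law of $(\hat{S}_{k-1+n})_{n\ge 0}$ given $\hat{\calF}_{k-1}$ depends on $\hat{\calF}_{k-1}$ only through $\hat{S}_{k-1}$. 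Letting $\hat{J}(s):=\hat{\bbe}_1^s[\hat{T}_h^c]$ denote the expected hitting time of $h$ by a CUSUM started at $s$ and driven by post-change innovations, this gives
\begin{align*}
\hat{\bbe}_k\bigl[(\hat{T}_h^c-(k-1))^+\,\big|\,\hat{\calF}_{k-1}\bigr]=\hat{J}(\hat{S}_{k-1})\,\mathbf{1}_{\{\hat{T}_h^c\ge k\}}.
\end{align*}
A pathwise coupling, based on the monotonicity of the recursion in its starting value, shows $\hat{J}$ is non-increasing on $[0,h)$, so the essential supremum over $\hat{\calF}_{k-1}$ and the supremum over $k$ both collapse to $\hat{S}_0=0$, giving $\hat{d}(\hat{T}_h^c)=\hat{J}(0)=\hat{\bbe}_1[\hat{T}_h^c]$.

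For (iii), I would linearize the false-alarm constraint with a Lagrange multiplier $\lambda>0$ and consider
\begin{align*}
\hat{\calJ}_\lambda(\hat{T}):=\hat{\bbe}_1[\hat{T}]-\lambda\,\hat{\bbe}_\infty[\hat{T}].
\end{align*}
Using the Bayes identity $d\hat{\bbp}_1/d\hat{\bbp}_\infty|_{\hat{\calF}_n}=\prod_{i=1}^n\hat{L}(\hat{X}_i)$, this functional can be rewritten as an unconstrained optimal stopping problem on the Markov chain $(\hat{S}_k)$ under $\hat{\bbp}_\infty$. I would then verify that the threshold rule $\hat{T}_h^c$ minimizes $\hat{\calJ}_\lambda$ through the associated Bellman / variational inequality, with continuation region $[0,h)$ and value function coinciding with $\hat{J}$ there, for the unique $\lambda=\lambda(h)$ for which $\hat{T}_h^c$ saturates the constraint $\hat{\bbe}_\infty[\hat{T}]=\gamma$.

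The principal obstacle is step (iii) --- specifically, the verification that the CUSUM threshold rule is optimal for the Lagrangian functional, together with the tightness of the Lagrangian relaxation for the constrained problem. The difficulty stems from the non-compactness of the state space of $(\hat{S}_k)$ and from the dual role of the threshold $h$, which simultaneously controls delay and false-alarm rate; the verification argument must carefully handle integrability of the hitting time and the boundary behaviour of the Markov chain at $s=0$, where the regenerative reset $\max(\hat{S}_{k-1},1)$ introduces a singularity. Once (iii) is in place, the chain of inequalities from (i)--(iii) delivers the stated optimality of $\hat{T}_h^c$.
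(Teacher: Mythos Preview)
Your steps (i) and (ii) are fine and mirror what the paper does (the equalizer identity $\hat d(\hat T_h^c)=\hat\bbe_1[\hat T_h^c]$ is exactly Eq.~(\ref{equalizer}) in the appendix). The gap is in step (iii): the claim that CUSUM minimises $\hat\bbe_1[\hat T]$ over $\hat\calT_\gamma$ is not true, and your Lagrangian does not yield CUSUM. Writing $\hat\bbe_1[\hat T]=\sum_{n\ge 1}\hat\bbe_\infty[L_{n-1}\mathbf 1_{\{\hat T\ge n\}}]$ with $L_k:=\prod_{i=1}^k\hat L(\hat X_i)$, one gets
\[
\hat{\calJ}_\lambda(\hat T)=\hat\bbe_\infty\Big[\sum_{k=0}^{\hat T-1}(L_k-\lambda)\Big],
\]
an additive optimal-stopping problem driven by the \emph{unreflected} likelihood ratio $L_k$, not by $\hat S_k$. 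Its solution is a threshold on $L_k$, i.e.\ a one-sided SPRT, not CUSUM. Concretely, the one-sided SPRT $T_a=\inf\{n:L_n\ge a\}$ has $\hat\bbe_\infty[T_a]=\infty\ge\gamma$ (the log-likelihood walk drifts to $-\infty$ under $\hat\bbp_\infty$) while $\hat\bbe_1[T_a]$ can be made smaller than $\hat\bbe_1[\hat T_h^c]$ by lowering $a$; truncating at a large $N$ brings $\hat\bbe_\infty$ down to $\gamma$ without spoiling this. So your assertion that the Lagrangian ``can be rewritten as an unconstrained optimal stopping problem on the Markov chain $(\hat S_k)$'' is the point where the argument breaks.

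Moustakides' route, which the paper follows verbatim for the analogous Proposition~\ref{ContTimeLordenDelta} (via Lemma~\ref{LowerBoundLemma} and Proposition~\ref{OptProbProp}), avoids this precisely by \emph{not} using the crude bound $\hat d(\hat T)\ge\hat\bbe_1[\hat T]$. Instead one forms a weighted mixture of the conditional delays over \emph{all} change points $k$, with data-dependent weights $(1-\hat S_{k-1})^+$, which produces the ratio lower bound
\[
\hat d(\hat T)\ \ge\ \frac{\hat\bbe_\infty\big[\sum_{k=0}^{\hat T-1}\max(\hat S_k,1)\big]}{\hat\bbe_\infty\big[\sum_{k=0}^{\hat T-1}(1-\hat S_k)^+\big]}.
\]
Both numerator and denominator are now additive functionals of the CUSUM chain $(\hat S_k)$ itself, so the Markovian optimal-stopping step (your Lagrangian idea, but applied to each of these two functionals separately with $g(x)=-\max(x,1)$ and $g(x)=(1-x)^+$) legitimately lands on a threshold rule for $\hat S_k$. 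The choice of weights is exactly what converts the problem from one about $L_k$ to one about $\hat S_k$; that is the idea your proposal is missing.
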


\begin{rem}\label{Remark1cp}
	\hfill
	\begin{itemize}
		\item[(i)]
		Note that $h>0$ implies $\gamma=\hat{\bbe}_{\infty}(\hat T_h^c)\geq 1$, since $\hat\S_0=0$, so at least one sample is needed for the barrier $h$ to be breached. Hence, the theorem {can equivalently be formulated} for a fixed rate of false alarms $\gamma\geq 1$, assuming the existence of a barrier $h>0$ such that $\hat\bbe_{\infty}(\hat\T_h^c)=\gamma$. For $0<\gamma< 1$, the optimal rule is to stop at $k=0$ w.p.\ $1-\gamma$, or stop at $k=1$ w.p.\ $\gamma$. 
		This stopping time outperforms any CUSUM rule, even after randomizing with $k=0$. That is, if $\hat\T_h^{c,p}=\hat\T_h^c$ w.p.\ $p$, and $\hat\T_h^{c,p}=0$ w.p.\ $1-p$, for some $h>0$ and $0<p<1$ such that $\bbe_{\infty}(\hat\T_h^{c,p})=\gamma$, 
		then $\hat\d(\hat\T_{h}^{c,p})=\hat\d(\hat\T_h^c)\geq 1>\gamma$. {To be precise, these stopping times based on a randomization do not belong to the set of admissible stopping times $\hat{\calT}_{\gamma}$, but that can simply be resolved by extending the probability space (see \cite[Ch.\ 5]{Kallenberg}) to include a random variable {$\hat X_0\in \hat \calF_0$} that is uniformly distributed on $[0,1]$, and that is independent of $(\hat X_k)_{k\geq 1}$ under each of the measures $\hat{\bbp}_k$.}
		\item[(ii)]
		The optimality of CUSUM hinges on the linear delay penalty in (\ref{Lorden}). This type of penalty is suitable for many applications, such as the monitoring of manufacturing processes, where the cost of discarded items grows linearly. 
		However, in other applications, it may be of interest to use a nonlinear cost function, such as in finance where the cost of an undetected change may increase exponentially. In this case, the CUSUM test can be arbitrarily unfavorable relative to the optimal test, if the rate at which delay penalty accumulates is too high relative to the rate at which information to discriminate between the pre-change and post-change distributions accumulates. However, in \cite{HVPoor} it is shown that a simple and intuitive adaptation of the CUSUM procedure is optimal when (\ref{Lorden}) is replaced by an exponential cost of delay function.  
	\end{itemize}
\end{rem}

An important implication of Theorem \ref{MoustakidesThm} is that CUSUM is optimal in Lorden's sense when sequentially observing evenly spaced increments of a continuous-time stochastic process like $X$, defined in (\ref{Xcan}), {which, under each of the measures $\bbp_{\tau}$, defined in (\ref{bbptau})}, has independent and {stationary} increments before and after the change-point $\tau$. 
To formalize this idea, 
we need to add to the notation introduced in Section \ref{SetupCP}: 
\begin{itemize}
	\item[(i)]
	For $\Delta>0$, denote by $Q_{0}^{(\Delta)}$ and $Q_{1}^{(\Delta)}$ the distributions of $X_{\Delta}$ under $\bbp_{\infty}$ and $\bbp_0$, respectively.
	\item [(ii)]
	Define the filtration $(\breve\calF_{k\Delta})_{k\geq 0}$ generated by the $\Delta$-increments of the process $X$: $\breve{\calF}_{0}:=\{\Omega,\emptyset\}$, $\breve\calF_{k\Delta}:=\sigma(\Delta_{i}{ X}: 1\leq i\leq{}k)$ for $k\geq 1$, and $\breve\calF_{\infty}:=\sigma(\Delta_{k}{ X}: k\geq 1)$, where $\Delta_i{ X}:={ X}_{i\Delta}-{ X}_{(i-1)\Delta}$, for $1\leq i\leq k$. 
	\item [(iii)] Let $\breve\calT(\Delta)$ be the set of $\Delta\bar\bbz_0^+$-valued stopping times $\breve{T}$ on $\Omega$ with respect to $(\breve\calF_{k\Delta})_{k\geq 0}$, and,
	as before, let $\breve\calT_{\gamma}(\Delta)$ be the subset of those stopping times
	that satisfy the false alarm constraint $\bbe_{\infty}(\breve{T})\geq{}\gamma$.
\end{itemize}
Note that 
under the measure $\bbp_{k\Delta}$, with $k\geq 0$, 
the sequence of increments $(\Delta_i {X})_{i\geq 1}$ consists of independent random variables whose marginal distribution changes from $Q_{0}^{(\Delta)}$ to $Q_{1}^{(\Delta)}$ after the $k$-th {increment}. That is, under $\bbp_{k\Delta}$, the random variables $\Delta_1{X},\dots,\Delta_{k}{ X}$ have distribution $Q_{0}^{(\Delta)}$, while the random variables $\Delta_{k+1}{X},\Delta_{ k+2}{X},\dots$ have distribution $Q_{1}^{(\Delta)}$. Similarly, under $\bbp_{\infty}$ the sequence $(\Delta_i {X})_{i\geq 1}$ is i.i.d.\ with distribution $Q_{0}^{(\Delta)}$. 

It then follows from Theorem \ref{MoustakidesThm} that the CUSUM stopping time
\begin{align*}
\breve\T_h^c(Q_{0}^{(\Delta)},Q_{1}^{(\Delta)}):=\inf\{k\Delta\geq 0:\breve\S_{k\Delta}\geq h\}=\Delta\inf\{k\geq 0:\breve\S_{k\Delta}\geq h\},
\end{align*} 
where {$h\geq 0$}, $\breve\S_0=0$, and 
\begin{align*}
\breve\S_{k\Delta}
:=\max_{1\leq j\leq k}\prod_{i=j}^{k}\frac{dQ_{1}^{(\Delta)}}{dQ_{0}^{(\Delta)}}(\Delta_i {X})
=\max(\breve\S_{(k-1)\Delta},1)\frac{dQ_{1}^{(\Delta)}}{dQ_{0}^{(\Delta)}}(\Delta_k {X}),\quad k\geq 1,
\end{align*}
solves the Lorden-type optimization problem defined by
\begin{align}\label{DiscTimeLordenDeltaPI}
\breve\Pi_{\gamma}^d(Q_{0}^{(\Delta)},Q_{1}^{(\Delta)}) := \inf_{\breve{T}\in\breve\calT_{\gamma}(\Delta)} \breve{d}(\breve{T},\Delta),
\end{align}
where
\begin{align}\label{DiscTimeLordenDeltaD}
\breve{d}(\breve{T},\Delta):=\sup_{k\geq 0}{\esssup}\, \bbe_{k\Delta}\big(\big.\big(\breve{T}-{k}\Delta\big)^{+}\big|\breve\calF_{ k\Delta}\big),
\end{align}
and $\gamma=\bbe_{\infty}(\breve\T_h^c(Q_{0}^{(\Delta)},Q_{1}^{(\Delta)}))$. {In the following section (see Prop.\ \ref{ContTimeLordenDelta} therein), we extend this result to a setting where rather than observing the {discrete} increments $(\Delta X_i)_{i\geq 1}$, 
	one observes the {entire trajectory of the process $X$,} 
	but the change-point is still assumed to take values in the discrete set $\Delta\bar{\bbz}_0^+$.}

\subsection{Continuous time}\label{secLorden2}
Now we return to the continuous-time framework, as introduced in Section \ref{SetupCP}. Recall that under the probability measure $\bbp_{\tau}$, the distribution of the observed process $X$, defined in (\ref{Xcan}), undergoes an abrupt shift at the change-point $\tau$, and $\tau\in\bar{\bbr}_0^+$ is assumed to be deterministic but unknown. The continuous-time analogue of Lorden's change-point detection problem (\ref{DiscTimeLorden}) can then be defined as the optimization problem
\begin{align}\label{ContTimeLorden}
\Pi_{\gamma}^{c}:=\inf_{T\in\calT_{\gamma}} d^c(T), 
\end{align}
where the infimum is taken over all stopping times $T$ with respect to the filtration generated by the observed process, that satisfy a lower bound on the mean time between false alarms, given by $\bbe_{\infty}(T)\geq\gamma$, and 
\begin{align}\label{ContLorden}
d^c(T):=\sup_{\tau\geq{}0}{\esssup}\, \bbe_{\tau}\big(\big.\big(T-\tau\big)^{+}\big|\calF_{\tau}\big),
\end{align}
so detection delay is penalized linearly via its worst-case expected value under each of the measures $\bbp_{\tau}$.

The following theorem is our main result and it shows that the continuous-time Lorden problem (\ref{ContTimeLorden})-(\ref{ContLorden}) is solved by the continuous-time analogue of the CUSUM stopping time. {The remarks that follow then discuss some extensions of the theorem, and provide examples for specific types of L\'evy processes.}

\begin{thm}\label{ContTimeLordenThm}
	Let {$h\geq 1$} and define the CUSUM stopping time by 
	\begin{align}\label{Thc}
	T_h^c:=\inf\{t\geq 0:S_t\geq h\},
	\end{align}
	where the CUSUM process $(S_t)_{t\geq 0}$ is defined by 
	\begin{align}\label{St}
	S_t := \sup_{0\leq \tau\leq t}L_{t}^{(\tau)},\quad t\geq 0,
	\end{align}
	where $L_{t}^{(\tau)}$ is the likelihood ratio defined in (\ref{Lst}). Then, $T_h^c$ solves Lorden's optimization problem (\ref{ContTimeLorden})-(\ref{ContLorden}) with $\gamma=\bbe_{\infty}(T_h^c)$.
\end{thm}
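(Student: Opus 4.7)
The plan is to reduce the continuous-time problem to a sequence of approximating problems indexed by a grid mesh $\Delta > 0$, in which the change-point is constrained to lie in $\Delta\bar{\bbz}_0^+$, and then to transfer optimality to the original problem by letting $\Delta \downarrow 0$. For each $\Delta$, Proposition~\ref{ContTimeLordenDelta} (a hybrid continuous-observation, discrete-change-point analogue of Moustakides' theorem, to be established earlier in the paper) provides an optimal CUSUM rule $T_{h,\Delta}^c$ for the restricted problem
\begin{equation*}
\inf_{T \in \calT_\gamma(\Delta)} d^c(T,\Delta), \qquad d^c(T,\Delta) := \sup_{k\geq 0}\,\esssup\,\bbe_{k\Delta}\bigl((T-k\Delta)^+ \mid \calF_{k\Delta}\bigr),
\end{equation*}
whenever $\gamma = \bbe_\infty(T_{h,\Delta}^c)$; the barrier $h$ may be paired with a $t=0$ randomization, as in Remark~\ref{Remark1cp}(i), to match any prescribed $\gamma \geq 1$ exactly.

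Fix $h \geq 1$, set $\gamma := \bbe_\infty(T_h^c)$, and let $T \in \calT_\gamma$ be an arbitrary competitor in the continuous-time problem. Define the ceiling projection $T^\Delta := \Delta\lceil T/\Delta\rceil$: since $\{T^\Delta \leq k\Delta\} = \{T \leq k\Delta\} \in \calF_{k\Delta}$, it is a stopping time with respect to the skeleton filtration, and $T^\Delta \geq T$ gives $\bbe_\infty(T^\Delta) \geq \gamma$, so $T^\Delta \in \calT_\gamma(\Delta)$. Choosing $h_\Delta$ so that $\bbe_\infty(T_{h_\Delta,\Delta}^c) = \gamma$, the optimality from the first step combined with the pointwise bound $(T^\Delta - \tau)^+ \leq (T - \tau)^+ + \Delta$ yields
\begin{equation*}
d^c(T_{h_\Delta,\Delta}^c,\Delta) \,\leq\, d^c(T^\Delta,\Delta) \,\leq\, d^c(T) + \Delta.
\end{equation*}
It remains to establish (i) $h_\Delta \to h$ and $T_{h_\Delta,\Delta}^c \to T_h^c$ a.s.\ under each $\bbp_\tau$, using the right-continuity of the CUSUM process $(S_t)_{t\geq 0}$ in (\ref{St}) together with monotonicity/continuity of $h \mapsto \bbe_\infty(T_h^c)$; and (ii) $d^c(T_{h_\Delta,\Delta}^c,\Delta) \to d^c(T_h^c)$, by reducing each conditional expectation $\bbe_\tau\bigl((T_{h_\Delta,\Delta}^c - \tau)^+ \mid \calF_\tau\bigr)$, via the strong Markov property at $\tau$ and the stationary-independent-increments structure of $X$ under $\bbp_\tau$, to a deterministic function of the CUSUM state $S_\tau$, and then applying dominated convergence uniformly in $\tau$. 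Passing to the limit in the displayed chain then gives $d^c(T_h^c) \leq d^c(T)$, as desired.

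The main obstacle will be synchronizing the shared false-alarm constraint across the approximations. The map $h \mapsto \bbe_\infty(T_h^c)$ is monotone but may be discontinuous at thresholds where $(S_t)$ charges $\{S_t = h\}$ with positive mass, owing to the jumps of the L\'evy component of $U$ in (\ref{Ut}); the $\Delta$-analogue faces the same issue. A careful selection of $h_\Delta$ with $t = 0$ randomization, or a sandwich argument using $h \pm \epsilon$ that exploits the density of continuity points, should resolve this. A second subtle point is upgrading pointwise convergence of the inner conditional delays to convergence of the outer essential supremum over $\tau$; this is where the L\'evy structure is essential, as it expresses the inner expectation as a measurable function of $S_\tau$ alone, whose modulus of continuity can be controlled uniformly over the relevant range of $\tau$ using the square-integrability condition (\ref{levyCond}) and dominated convergence.
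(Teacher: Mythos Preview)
Your overall strategy---discretize the change-point grid, invoke Proposition~\ref{ContTimeLordenDelta}, project a generic competitor onto the grid via the ceiling map, and pass to the limit---matches the paper's. The substantive technical divergence is in how the false-alarm constraint is synchronized across the approximations. You fix $\gamma$ and solve for a matching barrier $h_\Delta$ (possibly with randomization) at each mesh; the paper instead \emph{fixes the barrier $h$} throughout and lets $\gamma_n:=\bbe_\infty(T_h^c(\Delta_n))$ float, with $\gamma_n\searrow\gamma$ by monotone convergence since $T_h^c(\Delta_n)\searrow T_h^c$ (Lemma~\ref{lemConvStop}). A competitor $T\in\calT_\gamma$ is then embedded into the tighter class $\calT_{\gamma_n}(\Delta_n)$ by the shifted ceiling
\[
T_n:=\Big\lceil\frac{T}{\Delta_n}\Big\rceil\Delta_n+\Big\lceil\frac{\gamma_n-\gamma}{\Delta_n}\Big\rceil\Delta_n,
\]
whose extra deterministic shift vanishes in the limit. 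The payoff of this choice is that the crucial lower bound $d^c(T_h^c)\le d(T_h^c(\Delta_n),\Delta_n)$ becomes a one-line consequence of the equalizer identity $d(T_h^c(\Delta_n),\Delta_n)=\bbe_0(T_h^c(\Delta_n))$ together with the pointwise inequality $T_h^c\le T_h^c(\Delta_n)$: there is no need to establish $h_\Delta\to h$, no joint limit in $(\Delta,h_\Delta)$, and no ``dominated convergence uniformly in $\tau$'' argument (in fact the equalizer property already collapses the $\sup_\tau$ to $\tau=0$, so uniformity in $\tau$ is a non-issue once that identity is in hand). Your route is viable, but the step $h_\Delta\to h$---which you correctly flag as the main obstacle---requires an inverse-function argument combining left-continuity of $h\mapsto\bbe_\infty(T_h^c)$ with the convergence $\bbe_\infty(T_{h',\Delta}^c)\to\bbe_\infty(T_{h'}^c)$, and the randomization you invoke to hit $\gamma$ exactly further entangles the analysis. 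The paper's device of absorbing the constraint mismatch into a deterministic shift on the competitor side bypasses all of this; the atom condition (\ref{atom}) is then handled separately at the end by approximating $h$ from below through non-atomic thresholds, as you also anticipate.
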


\begin{rem}\label{ThCY}
	\hfill
	\begin{itemize}
		\item[(i)]
	This theorem encompasses previously established results on a change in the drift of a Brownian motion (see, e.g., \cite{Moustakides2}), and a change in the jump-intensity of a homogeneous Poisson process (cf.\ \cite{ElKaroui}). 
	Moreover, in a unified framework it also includes changes in the statistical properties of more general L\'evy processes, such as compound Poisson processes, jump-diffusions, and L\'evy processes with infinite jump activity. 
	\item[(ii)] 
	In Section \ref{SetupCP} we assumed the processes $X^0$ and $X^1$ to be 
		c\`adl\`ag, but the theorem extends to any processes with independent and stationary increments that are continuous in probability, since such processes have unique c\`adl\`ag modifications that are identical in distribution to the original processes (cf.\ \cite[Sec.\ 11]{Sato}).
		\item[(iii)] 
		The extension 
		to multidimensional L\'evy processes is also straightforward. The proof goes through without any significant changes if $X^0$ and $X^1$ are L\'evy processes on $\bbr^d$ for some $d>1$, with {generating} triplets $(A^{(0)},b^{(0)},\nu^{(0)})$ and $(A^{(1)},b^{(1)},\nu^{(1)})$, where the Brownian covariance matrices satisfy $A^{(0)}=A^{(1)}$, the drift parameters are such that $b^{(1)}-b^{(0)}-\int_{|x|\leq 1}x(\nu^{(1)}-\nu^{(0)})(dx)=A^{(0)}\alpha$ for some $\alpha\in\bbr^d$, with $\alpha=0$ if $A^{(0)}=0$, and the L\'evy measures $\nu^{(0)}$ and $\nu^{(1)}$ are equivalent and satisfy the integrability condition (\ref{levyCond}).
	\item[(iv)]
		{Our strategy of proof is based on approximating the continuous-time problem by discrete-time problems, and 
		\begin{align*}
		\tilde d^c(T):=\sup_{\tau\geq{}0}{\esssup}\, \bbe_{\tau}\big(\big.\big(T-\tau\big)^{+}\big|\calF_{{\tau}^-}\big),\quad T\in\calT,
		\end{align*}
		can be viewed as a natural continuous-time limit of Lorden's criterion (\ref{Lorden}), where 
		$\hat{\calF}_{k-1}$ is the information set \emph{prior} to the change-point.
		However, it turns out that $\tilde d^c(T)$ coincides with Lorden's measure $d^c(T)$, 
		for any $T\in\calT$, due to the quasi-left-continuity of the filtration $(\calF_t)_{t\geq 0}$.} 

	\end{itemize}
\end{rem}

\begin{rem}\label{logThc}
	\hfill
	\begin{itemize}
	\item[(i)] The proof of Theorem \ref{ContTimeLordenThm} (see Eq.\ (\ref{dThc}) below) shows that the CUSUM stopping time is an {equalizer rule} in the sense that its performance does not depend on the value of the change-point $\tau$:
		\begin{align*}
		d^c(T_h^c):=\sup_{\tau\geq{}0}{\esssup}\, \bbe_{\tau}\big(\big.\big(T-\tau\big)^{+}\big|\calF_{\tau}\big) = \bbe_{0}(T_h^c).
		\end{align*}
		The quantities $\bbe_0(T_h^c)$ and $\bbe_{\infty}(T_h^c)$ are generally referred to as the {average-run-lengths} (ARL) under the \emph{out-of-control} and \emph{in-control} regimes $\bbp_0$ and $\bbp_{\infty}$, respectively, and are standard measures of the performance of the CUSUM {procedure}.    
	\item[(ii)] 
	The CUSUM process (\ref{St}) is also known as the maximum likelihood ratio process, and by using (\ref{LstRatio}) it is easy to see that $\hat\tau:=\sup\{t\leq T_h^c:S_t= 1\}$ is the maximum likelihood estimate for the change-point $\tau$, based on the observed process up to time $T_h^c$. The CUSUM procedure thus combines detection and estimation, which is one reason for its sustained popularity in practical applications.
	It can also be viewed as 
	{a sequential procedure for testing} the \emph{in-control} null hypothesis $H_0$ 
	against the \emph{out-of-control} alternative $H_1$, 
	with a change announced as soon as the maximum likelihood ratio test statistic (\ref{St}) breaches a prescribed barrier. This barrier  reflects the trade-off between a large ARL under $H_0$ and a small ARL under $H_1$, which are analogous to Type I and Type II error probabilities in conventional hypothesis testing.
	\item[(iii)] 
	Another useful representation of the CUSUM stopping time is
	\begin{align}\label{ThCnew}
	T_h^c =  \inf\{t\geq 0:\log(S_t)\geq\log(h)\} = \inf\{t\geq 0:Y_t\geq\bar\h\},
	\end{align}
	where $\bar\h:=\log(h)\geq 0$ for $h\geq 1$, and, from (\ref{LstRatio})-(\ref{LstU}), it follows that the 
	process $(Y_t)_{t\geq 0}$ has the form 
	\begin{align}\label{Yt}
	Y_t & := \sup_{s\leq t}(U_t-U_s) = U_t - \inf_{0\leq s\leq t}U_s,
	\end{align}
	where $(U_t)_{t\geq 0}$ is the log-likelihood process defined in (\ref{Ut}). 
	This shows that the CUSUM stopping time is the first hitting time to $[\bar h,\infty)$ 
	of the process $(U_t)_{t\geq 0}$ reflected at its running minimum. This is also referred to as the drawup process of $(U_t)_{t\geq 0}$, and it has, along with the corresponding drawdown process, 
 received considerable attention in the financial risk management literature {(see \cite{Landriault} and references therein).} 
	\item[(iv)] 
	The expression (\ref{Ut}) for $U_t$ can be written more concisely for specific L\'evy processes:
	\begin{enumerate}
		\item[(a)]
		Let $X$ be a standard Brownian motion 
		with a change in drift from $0$ to a nonzero $\mu\in\bbr$. Then,
		\begin{align}\label{UBM}
		U_t = \mu X_t - \half\mu^2 t, \quad t\geq 0,
		\end{align}
		so the process $(U_t)_{t\geq 0}$ is a Brownian motion with drift shifting from $-\mu^2/2<0$ to $\mu^2/2>0$ at the change-point $\tau$, which in turn drives the process $(Y_t)_{t\geq 0}$ to the barrier $\bar h$.
		\item[(b)]
		Let $X$ be a compound Poisson process with a linear drift $b\in\bbr$ and 
		a change in L\'evy measures from $\nu^{(0)}$ to $\nu^{(1)}$. 
		Then,
		\begin{align}\label{Upoi}
		U_t = \sum_{0\leq s\leq t}\varphi(\Delta X_s) -(\lambda^{(1)}-\lambda^{(0)})t, \quad t\geq 0,
		\end{align}
		where $\lambda^{(i)}=\nu^{(i)}(\bbr)$, $i=0,1$, are the pre-change and post-change jump intensities of $X$, and {$\varphi=\log({d\nu^{(1)}}/{d\nu^{(0)}})$}. Furthermore, if $d\nu^{(1)}/d\nu^{(0)}\equiv\lambda^{(1)}/\lambda^{(0)}$, i.e.\ only the overall jump intensity changes, then 
		\begin{align*}
		U_t = \log\Big(\frac{\lambda^{(1)}}{\lambda^{(0)}}\Big)N_t - (\lambda^{(1)}-\lambda^{(0)})t,\quad t\geq 0,
		\end{align*}
		where 
		$(N_t)_{t\geq 0}$ is a counting process with jump-intensity shifting from $\lambda^{(0)}$ to $\lambda^{(1)}$ at the change-point $\tau$. 
		If $\lambda^{(1)}<\lambda^{(0)}$ the process $(Y_t)_{t\geq 0}$ drifts continuously through the barrier $\bar h$, but if $\lambda^{(1)}>\lambda^{(0)}$ it crosses the barrier by jumping and may overshoot it.
		\item[(c)]
		Let $X$ be a jump-diffusion process, $X=X^c+X^{j}$ where $X^{{c}}$ is a standard Brownian motion with a drift shifting from $0$ to $\mu\neq 0$, and $X^{{j}}$ a compound Poisson process with a L\'evy measure changing from $\nu^{(0)}$ to $\nu^{(1)}$. 
			In that case,
		\begin{align*}
		U_t = \mu X_t^c - \half\mu^2 t + \sum_{0\leq s\leq t}\varphi(\Delta X_s^j) -(\lambda^{(1)}-\lambda^{(0)})t, \quad t\geq 0,
		\end{align*}
		which is simply the sum of the log-likelihood ratios in (\ref{UBM}) and (\ref{Upoi}). In other words, information to distinguish between the pre-change and post-change distributions accumulates independently from the continuous component and the jump component, which is simply a consequence of their independence. This extends to L\'evy processes with infinite jump activity for which the three components of the L\'evy-It\^o decomposition - the continuous component, the ``small-jump'' component, and the ``large-jump'' component - are all independent (see, e.g., \cite[Ch.\ 4]{Sato}).
		\item[(d)]
		Let $X$ be a pure-jump L\'evy process with infinite jump activity. Then,
			\begin{align*}
			U_t 
			&=\int_0^t\int_{\bbr\setminus\{0\}}\varphi(x)\bar N(dx,ds)+\beta t, \quad t\geq 0,
			\end{align*}
			where $\bar N$ is a compensated Poisson random measure with intensity measure $\nu^{(0)}(dx)dt$ under $\bbp_{\infty}$ and $\nu^{(1)}(dx)dt$ under $\bbp_0$, 
			and condition (\ref{levyCond}) implies that the stochastic integral $(U_t-\beta t)_{t\geq 0}$ 
			is a square-integrable zero-mean martingale. 
			The drift $\beta$ under $\bbp_{\infty}$ is
			\begin{align*}
			\beta^{(0)} &= -\int_{\bbr\setminus\{0\}}(e^{\varphi(x)}-1-\varphi(x))\nu^{(0)}(dx)<0,
			\end{align*}
			while under $\bbp_0$ it is 
			\begin{align*}
			\beta^{(1)} &= \beta^{(0)} + \int_{\bbr\setminus\{0\}}\varphi(x)(\nu^{(1)}-\nu^{(0)})(dx)
			= \int_{\bbr\setminus\{0\}}(e^{\varphi(x)}(\varphi(x)-1)+1)\nu^{(0)}(dx)>0, 
			\end{align*}
			which in turn pushes $(Y_t)_{t\geq 0}$ towards the barrier $\bar h$ when the change-point $\tau$ is passed. Note that condition (\ref{levyCond}) ensures that the integrals appearing in the drift coefficients are well defined.
	\end{enumerate}
	\end{itemize}
\end{rem}

As previously mentioned, the proof of Theorem \ref{ContTimeLordenThm} is based on considering a sequence of discrete-time problems. More precisely, the first step is to show that a ``discretized'' version of the CUSUM stopping time $T_h^c$ solves a change-point problem where the change-point is restricted to take values in the discrete set {$\Delta\bar\bbz_0^+$}, for some $\Delta>0$. This gives rise to an optimization problem similar to the one in (\ref{DiscTimeLordenDeltaPI})-(\ref{DiscTimeLordenDeltaD}), 
but rather than conditioning on $\breve\calF_{k\Delta}=\sigma(\Delta_{i}{ X}: 1\leq i\leq{}k)$, the $\sigma$-algebra generated by the $\Delta$-increments of the observed process, we condition on $\calF_{k\Delta}=\sigma(X_t,\,t\leq k\Delta)$, the $\sigma$-algebra generated by the paths of the process itself. {The following proposition formalizes this idea, which is a nontrivial and somewhat unexpected extension of the result of Moustakides \cite{Moustakides} for sequentially observed random variables.}

\begin{prop}\label{ContTimeLordenDelta}
	Let {$h\geq 0$} and 
	\begin{align}\label{ContTimeLordenCusum}
	T_h^c(\Delta) :=\Delta\inf\{k\geq 0:S_{k}({\Delta})\geq h\}, 
	\end{align}
	where $S_0(\Delta)=0$, and
	\begin{align}\label{SkDelta}
	S_k(\Delta) := \sup_{0\leq m<k}L_{k\Delta}^{(m\Delta)},\quad k\geq 1.
	\end{align}
	Then $T_h^c(\Delta)$ solves the optimization problem
	\begin{align}\label{ContLordenDelta}
	\Pi_{\gamma}^{c}(\Delta) := \inf_{\T\in\calT_{\gamma}(\Delta)}d(T,\Delta),
	\end{align}
	where
	\begin{align}\label{ContLordenDelta2}
	d(T,\Delta):=\sup_{k\geq{}{0}}{\esssup}\,\bbe_{k\Delta}\big(\big(\T-{k}\Delta\big)^{+}|\calF_{k\Delta}\big),
	\end{align}
	and $\gamma=\bbe_{\infty}(T_h^c(\Delta))$.
\end{prop}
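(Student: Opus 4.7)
The strategy is to reduce Proposition~\ref{ContTimeLordenDelta} to the discrete-time theorem of Moustakides (Theorem~\ref{MoustakidesThm}) by treating the $k$-th ``observation'' as the full path segment $\omega_k:=(X_{(k-1)\Delta+s})_{0\leq s\leq\Delta}$, viewed as a random element of the Polish space $\bbd([0,\Delta],\bbr)$, rather than merely the scalar increment $\Delta_k X$. Independence and stationarity of increments make $(\omega_k)_{k\geq 1}$ i.i.d.\ under $\bbp_{\infty}$ with law $\calL_0$ (the path-law of $X^0$ on $[0,\Delta]$), while under $\bbp_{k\Delta}$ the segments $\omega_1,\dots,\omega_k$ have law $\calL_0$ and $\omega_{k+1},\omega_{k+2},\dots$ have law $\calL_1$ (the path-law of $X^1$). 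A standard monotone class argument (using $X_0=0$ and the definition of the canonical process) identifies $\calF_{k\Delta}$ with $\sigma(\omega_1,\dots,\omega_k)$ up to $\bbp_{\infty}$-null sets, so the admissible stopping times in $\calT_{\gamma}(\Delta)$ are exactly those adapted to the filtration generated by the path-valued observations.

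Assumptions~(i)-(iii) then give $\calL_0\sim\calL_1$ with Radon-Nikod\'ym derivative $\Lambda_k:=(d\calL_1/d\calL_0)(\omega_k)=L_{k\Delta}^{(0)}/L_{(k-1)\Delta}^{(0)}$, which via (\ref{LstRatio}) rewrites the CUSUM statistic as
\begin{align*}
S_k(\Delta)=\sup_{0\leq m<k}L_{k\Delta}^{(m\Delta)}=\max_{1\leq j\leq k}\prod_{i=j}^{k}\Lambda_i=\max(S_{k-1}(\Delta),1)\Lambda_k,
\end{align*}
which is exactly Moustakides' recursion (\ref{cs2}) with $\hat L(\hat X_i)$ replaced by $\Lambda_i$. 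Under the indexing correspondence ``Moustakides' change at index $k+1$'' $\leftrightarrow$ ``our change-point $k\Delta$'' -- so that the pre-change observations are $\omega_1,\dots,\omega_k$ in both formulations -- his conditioning on $\hat\calF_k$ becomes conditioning on $\calF_{k\Delta}$, and his delay $(\hat T-k)^+$ becomes $(T-k\Delta)^+/\Delta$ for $T=\Delta\hat T$. Consequently $d(T,\Delta)=\Delta\,\hat d(\hat T)$, the false-alarm constraint $\bbe_{\infty}(T)\geq\gamma$ becomes $\hat\bbe_{\infty}(\hat T)\geq\gamma/\Delta$, and the optimization problem (\ref{ContLordenDelta})-(\ref{ContLordenDelta2}) is equivalent up to a scaling factor to the discrete problem (\ref{DiscTimeLorden}) for the path-valued sequence $(\omega_k)_{k\geq 1}$.

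The substantive obstacle is therefore to verify that Moustakides' proof survives the change of observation space from $\bbr$ to the Polish space $\bbd([0,\Delta],\bbr)$. His argument is purely measure-theoretic -- it rests on a backward-induction/optimal-stopping characterization of the worst-case conditional delay, a change-of-measure identity based on the likelihood ratio, and the equalizer property of the CUSUM rule -- and it uses nothing about $\bbr$ beyond the existence of regular conditional distributions, which is automatic in Polish spaces. Each step of \cite{Moustakides} can thus be re-verified with $\hat X_k$ and $\hat L(\hat X_k)$ replaced by $\omega_k$ and $\Lambda_k$ respectively; the two key ingredients that make this extension transparent are the identification $\calF_{k\Delta}=\sigma(\omega_1,\dots,\omega_k)$ and the fact that, under $\bbp_{\infty}$, the $\Lambda_k$ are i.i.d.\ with the common law of $L_{\Delta}^{(0)}$, independent of $k$. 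No new ideas beyond those already present in \cite{Moustakides} are required, and once this path-space version of Theorem~\ref{MoustakidesThm} is in place, the proposition follows from the rewriting above.
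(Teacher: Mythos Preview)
Your reduction is morally the same as the paper's argument: both amount to rerunning Moustakides' proof with the scalar observations replaced by path-valued ones. The paper does not package it as a black-box appeal to Theorem~\ref{MoustakidesThm} on a Polish state space; instead it carries out the re-verification explicitly, first reducing to stopping times with $\bbe_{\infty}(T)=\gamma$ and then proving two results (Lemma~\ref{LowerBoundLemma} and Proposition~\ref{OptProbProp}) that are direct transcriptions of Moustakides' lower-bound and Lagrangian/optimal-stopping steps to the sequence $(S_k(\Delta))_{k\geq 0}$. Your route and the paper's therefore differ only in presentation: you invoke ``Moustakides' proof goes through verbatim on a Polish space,'' while the paper writes that verbatim proof out.

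One technical slip to fix: as you define them, the segments $\omega_k=(X_{(k-1)\Delta+s})_{0\leq s\leq\Delta}$ are \emph{not} independent, since $\omega_k(0)=X_{(k-1)\Delta}=\omega_{k-1}(\Delta)$. You need the increment paths $\tilde\omega_k:=(X_{(k-1)\Delta+s}-X_{(k-1)\Delta})_{0\leq s\leq\Delta}$; these are i.i.d.\ under $\bbp_{\infty}$, change law at index $k+1$ under $\bbp_{k\Delta}$, still generate $\calF_{k\Delta}$ (using $X_0=0$), and have likelihood ratio $\Lambda_k=L_{k\Delta}^{(0)}/L_{(k-1)\Delta}^{(0)}$ exactly as you wrote. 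With that correction your identification of (\ref{ContLordenDelta})--(\ref{ContLordenDelta2}) with Moustakides' problem is correct, and the only remaining content is the Polish-space re-verification, which---as you say and as the paper's appendix confirms---requires nothing beyond the recursion $S_k(\Delta)=\max(S_{k-1}(\Delta),1)\Lambda_k$, the Markov property of $(S_k(\Delta))_{k\geq 0}$, and standard optimal stopping.
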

\medskip
\begin{rem}\label{Remark2}
	\hfill
	\begin{itemize}
		\item[(i)] 	This proposition serves as a stepping stone in the proof of Theorem \ref{ContTimeLordenThm}, but it is also of importance in its own right. 
		It states that the CUSUM stopping time (\ref{ContTimeLordenCusum}) is optimal when continuously monitoring a process whose distribution undergoes a change at an unknown time $\tau$ that is assumed to belong 
		to a discrete set of times, and the change is also declared at one of those times. For example, in financial applications the change may reasonably be assumed to take place at the beginning of a new business day, and in quality control a similar thing can be said about the change from the \emph{in-control} state to the \emph{out-of-control} state.
		\item[(ii)]
		Remark \ref{Remark1cp}-(i) following Theorem \ref{MoustakidesThm} also applies here. That is, $\gamma=\bbe_{\infty}(T_h^c(\Delta))\geq\Delta$ for any $h>0$, so the theorem can equivalently be stated for a fixed $\gamma\geq\Delta$, assuming the existence of a barrier $h$ such that $\bbe_{\infty}(T_h^c(\Delta))=\gamma$. 
		On the other hand, for $0<\gamma< \Delta$ the optimal stopping rule is to randomize between $k=0$ and $k=\Delta$, with probabilities $1-\gamma$ and $\gamma$, respectively.
		\item[(iii)]
		As in the discrete-time case (see Eq.\ (\ref{cs2})), it is {easy to check that (\ref{LstRatio}) implies the following recursive formula for the CUSUM process (\ref{SkDelta})}:
		\begin{align}\label{Sk}
		S_k(\Delta) 
		=\max(S_{k-1}(\Delta),1)L_{k}(\Delta),\quad k\geq 1,
		\end{align}
		where for brevity we have defined $L_{k}(\Delta):=L_{k\Delta}^{((k-1)\Delta)}$. 
	\end{itemize}
\end{rem}

Before proving the proposition, we remark that it is sufficient to consider stopping times $T\in{\calT_{\gamma}(\Delta)}$ that satisfy the constraint $\bbe_{\infty}(T)=\gamma$ with equality. 
First, if $T$ satisfies $\bbe_{\infty}(T)=\infty$, then it can be excluded by choosing a sufficiently large integer $n$ such that $\gamma\leq \bbe_{\infty}(T\wedge n\Delta)<\infty$, and $d(T\wedge n\Delta,\Delta)\leq d(T,\Delta)$. 
Second, if $T$ satisfies $\gamma<\bbe_{\infty}(T)<\infty$, then we can consider a stopping time $T^{(p)}$ such that $T^{(p)}=T$ w.p.\ $p$, and $T^{(p)}=0$ w.p.\ $1-p$, where $p=\gamma/\bbe_{\infty}(T)$. Then $\bbe_{\infty}(T^{(p)})=\gamma$, and $d(T^{(p)},\Delta)\leq d(T,\Delta)$, so $T^{(p)}$ outperforms $T$, while satisfying the false alarm constraint.


After {the simplifying assumption described in the previous paragraph},
the proof follows similar steps as the proof of Theorem \ref{MoustakidesThm}, using the methodology developed by Moustakides in \cite{Moustakides}. It rests on the following two results, whose proofs are deferred to the appendix. 
The first one gives a convenient lower bound for the performance of a generic stopping time, which the CUSUM stopping time $T_h^c(\Delta)$ satisfies with equality, while the second one shows that $T_h^c(\Delta)$ is the solution to a key optimization problem. 
\begin{lem}\label{LowerBoundLemma}
	Let $T\in{\calT(\Delta)}$ such that $0<\bbe_{\infty}(T)<\infty$. Then,
	\begin{align}\label{LowerBound}
	d(T,\Delta) \geq \bar d(T,\Delta):= {\Delta}\frac{\bbe_{\infty}\Big(\sum_{k=0}^{{T/\Delta}-1}\max(S_k(\Delta),1)\Big)}{\bbe_{\infty}\Big(\sum_{k=0}^{{T/\Delta}-1}\big(1-S_{k}(\Delta)\big)^{+}\Big)},
	\end{align}
	with equality if $T=T_h^c(\Delta)$ for some $h>0$.
\end{lem}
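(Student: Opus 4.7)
The plan is to follow the weighted-ratio strategy introduced by Moustakides \cite{Moustakides}, adapted to the setting where the canonical process $X$ is observed continuously but the change-point lies in $\Delta\bar\bbz_0^+$. The backbone of the argument consists of: (i) producing a family of pointwise lower bounds on $d(T,\Delta)$ from its definition; (ii) weighting and summing them against $\calF_{k\Delta}$-measurable densities $\pi_k\geq 0$; (iii) applying a change of measure from $\bbp_{k\Delta}$ to $\bbp_{\infty}$ via the likelihood ratio $L^{(k\Delta)}_{j\Delta}$; and (iv) making a specific choice of $\pi_k$ that converts the resulting sum into the claimed ratio.

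Since $\bbp_{k\Delta}\sim\bbp_{\infty}$ on $\calF_t$ for each finite $t$, the essential supremum in (\ref{ContLordenDelta2}) is the same under either measure, and so $d(T,\Delta)\geq\bbe_{k\Delta}[(T-k\Delta)^+\mid\calF_{k\Delta}]$ holds $\bbp_{\infty}$-a.s. Multiplying by $\pi_k$, taking $\bbe_{\infty}$, using $\bbe_\infty=\bbe_{k\Delta}$ on $\calF_{k\Delta}$ followed by the tower property under $\bbp_{k\Delta}$, decomposing $(T-k\Delta)^+=\Delta\sum_{j\geq k}\mathbf{1}_{\{T>j\Delta\}}$, and changing measure on $\calF_{j\Delta}$ yields
\begin{align*}
d(T,\Delta)\sum_{k\geq 0}\bbe_{\infty}[\pi_k]\;\geq\;\Delta\sum_{j\geq 0}\bbe_{\infty}\Bigl[\mathbf{1}_{\{T>j\Delta\}}\sum_{k=0}^{j}\pi_k L^{(k\Delta)}_{j\Delta}\Bigr].
\end{align*}
I then take $\pi_k:=(1-S_k(\Delta))^+\mathbf{1}_{\{T>k\Delta\}}$, which is $\calF_{k\Delta}$-measurable since $T\in\calT(\Delta)$. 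The crucial algebraic step is the pathwise identity
\begin{align*}
\sum_{k=0}^{j}(1-S_k(\Delta))^+\,L^{(k\Delta)}_{j\Delta}\;=\;\max(S_j(\Delta),1),
\end{align*}
which I will prove by induction on $j$: writing $L^{(k\Delta)}_{j\Delta}=L^{(0)}_{j\Delta}/L^{(0)}_{k\Delta}$ and setting $M_j:=\min_{0\leq m\leq j}L^{(0)}_{m\Delta}$, so that $\max(S_j(\Delta),1)=L^{(0)}_{j\Delta}/M_j$, the identity reduces to the telescoping claim $\sum_{k=0}^{j}(1-S_k(\Delta))^+/L^{(0)}_{k\Delta}=1/M_j$, which follows from the case analysis $L^{(0)}_{k\Delta}<M_{k-1}$ vs.\ $L^{(0)}_{k\Delta}\geq M_{k-1}$. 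Because $\mathbf{1}_{\{T>j\Delta\}}\mathbf{1}_{\{T>k\Delta\}}=\mathbf{1}_{\{T>j\Delta\}}$ for $k\leq j$, substitution produces the claimed bound, with denominator strictly positive (as $S_0(\Delta)=0$ gives $(1-S_0)^+=1$, and $\bbe_{\infty}[T]>0$) and bounded above by $\bbe_{\infty}[T]/\Delta<\infty$.

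For the equality when $T=T_h^c(\Delta)$, I will exploit the renewal/Markov structure of the CUSUM process: the recursion (\ref{Sk}) shows that whenever $S_k(\Delta)<1$ one has $S_{k+1}(\Delta)=L_{k+1}(\Delta)$, so that the post-$k\Delta$ evolution of $S$, conditioned on $\calF_{k\Delta}$ under $\bbp_{k\Delta}$, has the same distribution as $(S_j(\Delta))_{j\geq 0}$ started from $S_0=0$ under $\bbp_{0}$. Consequently, on the support $\{S_k(\Delta)<1,\,T_h^c(\Delta)>k\Delta\}=\{\pi_k>0\}$, the conditional delay $\bbe_{k\Delta}[(T_h^c(\Delta)-k\Delta)^+\mid\calF_{k\Delta}]$ is the deterministic constant $\bbe_{0}[T_h^c(\Delta)]$, and this constant coincides with $d(T_h^c(\Delta),\Delta)$ by the equalizer-rule property (the map $s\mapsto\bbe_{k\Delta}[T_h^c(\Delta)-k\Delta\mid S_k=s]$ is non-increasing, constant on $[0,1)$, vanishing on $[h,\infty)$, and hence attains its supremum precisely on $\{S_k<1\}$). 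This upgrades every inequality in the derivation to an equality, and the remaining manipulations are unchanged, so the bound is attained. The step I expect to be the most delicate is establishing the combinatorial identity and recognizing $(1-S_k(\Delta))^+\mathbf{1}_{\{T>k\Delta\}}$ as the "right" weighting that recovers the CUSUM ratio; the accompanying change-of-measure manipulations rest on the $\bbp_{\infty}$-martingale property of $L^{(0)}_t$ from (\ref{LstU}) and are routine, given the reduction to $\bbe_{\infty}(T)<\infty$ already made in the remark preceding the lemma.
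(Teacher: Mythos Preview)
Your proposal is correct and follows essentially the same approach as the paper: the same weight $\pi_k=(1-S_k(\Delta))^+{\bf 1}_{\{T>k\Delta\}}$, the same change of measure, the same algebraic identity (your telescoping formulation $\sum_{k\leq j}(1-S_k(\Delta))^+/L^{(0)}_{k\Delta}=1/M_j$ is equivalent to the paper's inductive identity $\max(S_m(\Delta),1)=\sum_{k=0}^m(1-S_k(\Delta))^+\prod_{l=k+1}^m L_l(\Delta)$), and the same equalizer-rule argument for the equality case. The only cosmetic difference is that you package the Moustakides weighting scheme more abstractly before specializing $\pi_k$, and you prove the key identity via the running-minimum representation rather than directly from the recursion (\ref{Sk}).
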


\medskip

\begin{prop}\label{OptProbProp}
	Let $0<h<\infty$, $\gamma=\bbe_{\infty}(T_h^c(\Delta))$, and $g:[0,\infty)\rightarrow \bbr$ be a non-increasing and continuous function. Then $T_h^c$ satisfies
	\begin{align}\label{OptProb}
	\sup_{T}\bbe_{\infty}\Big(\sum_{k=0}^{T/{\Delta}-1}g(S_k(\Delta))\Big)
	=\bbe_{\infty}\Big(\sum_{k=0}^{T_h^c(\Delta)/{\Delta}-1}g(S_k(\Delta))\Big),
	\end{align} 
	where the supremum is taken over all stopping times $T\in{\calT(\Delta)}$ that satisfy $\bbe_{\infty}(T)=\gamma$.
\end{prop}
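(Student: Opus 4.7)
The plan is to follow the Moustakides methodology (Theorem~\ref{MoustakidesThm}) adapted to the continuous-time-with-discrete-stopping CUSUM chain. The starting observation is that, under $\bbp_\infty$, the recursion (\ref{Sk}) together with the independence of the $\Delta$-increments of $X$ makes $(S_k(\Delta))_{k\geq 0}$ a time-homogeneous Markov chain on $[0,\infty)$ with transition $s\mapsto \max(s,1)\,\ell$, where $\ell$ has the law of $L_\Delta^{(0)}$ under $\bbp_\infty$ (with $\bbe_\infty[\ell]=1$ by the martingale property in (\ref{LstU})). This chain is \emph{stochastically monotone} in its initial state, which combined with $g$ being non-increasing drives the whole argument.

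The first main step is to recast the constrained problem through a Lagrangian. For $\lambda \geq 0$, consider the unconstrained Markov optimal-stopping problem
\[
V_\lambda(s) := \sup_{T \in \calT(\Delta)} \bbe_\infty\Big[\sum_{k=0}^{T/\Delta-1}\big(g(S_k(\Delta)) - \lambda\Delta\big) \,\Big|\, S_0(\Delta) = s\Big].
\]
Standard dynamic programming yields the Bellman equation $V_\lambda(s) = \max\{0,\, g(s) - \lambda\Delta + \bbe_\infty[V_\lambda(\max(s,1)\ell)]\}$ and an optimal rule $T_\lambda^{\ast} = \Delta\inf\{k \geq 0 : V_\lambda(S_k(\Delta)) = 0\}$. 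The monotonicity of the chain in its starting value, together with $g$ non-increasing, forces $V_\lambda$ itself to be non-increasing; with $V_\lambda \geq 0$ this implies that the stopping region $\{V_\lambda = 0\}$ is of the form $[h_\lambda,\infty)$, so $T_\lambda^{\ast}$ is itself a CUSUM stopping time with barrier $h_\lambda$.

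The second main step is to match $h_\lambda$ with the prescribed $h$. Monotone and bounded-convergence arguments, together with the continuity of $g$, should show that $\lambda\mapsto h_\lambda$ is continuous, with $h_\lambda \to \infty$ as $\lambda\downarrow 0$ (continuation is essentially free) and $h_\lambda \to 0$ as $\lambda\to\infty$ (immediate stopping dominates). Picking $\lambda^{\ast}$ with $h_{\lambda^{\ast}}=h$ then identifies $T_{\lambda^{\ast}}^{\ast} = T_h^c(\Delta)$ as an optimizer of the $\lambda^{\ast}$-Lagrangian problem. The duality conclusion is then automatic: for any $T \in \calT(\Delta)$ with $\bbe_\infty[T] = \gamma$,
\[
\bbe_\infty\Big[\sum_{k=0}^{T/\Delta-1} g(S_k(\Delta))\Big] - \lambda^{\ast}\gamma \;\leq\; V_{\lambda^{\ast}}(0) \;=\; \bbe_\infty\Big[\sum_{k=0}^{T_h^c(\Delta)/\Delta-1} g(S_k(\Delta))\Big] - \lambda^{\ast}\gamma,
\]
where the equality uses $\bbe_\infty[T_h^c(\Delta)]=\gamma$; cancelling $\lambda^{\ast}\gamma$ gives the claim.

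The main obstacle will be establishing the continuity and surjectivity of $\lambda\mapsto h_\lambda$ onto the relevant range in $(0,\infty)$. This needs uniform-in-$s$ control of $V_\lambda$ and enough integrability to pass $\lambda\to 0$ in a neighbourhood where $V_\lambda$ stays finite, most plausibly bootstrapped from the finiteness of $\bbe_\infty[T_h^c(\Delta)]$ already built into the hypothesis. A secondary technicality is to justify the Bellman equation and the existence of the optimal rule on the unbounded state space $[0,\infty)$; this is most cleanly handled by truncating (working with $T\wedge n\Delta$) and passing to a monotone limit, using continuity of $g$ to identify the limits.
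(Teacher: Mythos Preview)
Your overall strategy---Lagrangian relaxation, Bellman equation, threshold structure from stochastic monotonicity, then matching the threshold to $h$---is precisely the paper's route. The gap is in the matching step.

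You assert that $h_\lambda\to\infty$ as $\lambda\downarrow 0$ because ``continuation is essentially free.'' This fails for the functions $g$ that actually matter: in the application of this proposition (proof of Proposition~\ref{ContTimeLordenDelta}) one takes $g(x)=-\max(x,1)$, so $g\le -1$ everywhere. Then even at $\lambda=0$ the per-step reward $g(S_k(\Delta))-\lambda\Delta$ is strictly negative, $V_0\equiv 0$, and $h_0=0$, not $\infty$. The correct critical value is not $0$ but
\[
\lambda_0\;=\;\frac{\bbe_\infty\bigl(\sum_{k=1}^{\tau_1}g(S_k(\Delta))\bigr)}{\bbe_\infty(\tau_1)},
\]
the long-run average of $g$ over a regeneration cycle (with $\tau_1$ the first return of $S_k(\Delta)$ to $[0,1]$); one has $V(\cdot,\lambda)<\infty$ for $\lambda>\lambda_0$ and $V(\cdot,\lambda)\to\infty$ as $\lambda\downarrow\lambda_0$. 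Since $\lambda_0$ can be negative, restricting to $\lambda\ge 0$ may miss the entire useful range. Establishing these facts requires the renewal/cycle analysis carried out in the paper's Lemmas~\ref{NuMoments} and~\ref{optcond}, which your sketch omits.

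A secondary issue: you plan to show $\lambda\mapsto h_\lambda$ is continuous and then pick $\lambda^\ast$ with $h_{\lambda^\ast}=h$. Continuity of $h_\lambda$ is not evident, since $V_\lambda$ may vanish on an $s$-interval and the boundary can jump. The paper sidesteps this by working instead with $b(\lambda)=g(h)-\lambda+\bbe_\infty\bigl[V(\max(h,1)L_1(\Delta),\lambda)\bigr]$, which is continuous in $\lambda$ by convexity of $V$; it locates a zero $\bar\lambda\in(\lambda_0,g(0))$ and then reads off from the Bellman equation that $\{s\ge h\}$ lies in the stopping region at $\bar\lambda$, explicitly allowing $h_{\bar\lambda}<h$ and noting that every barrier in $[h_{\bar\lambda},h]$ is equally optimal.
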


\medskip
\noindent\textbf{Proof of Proposition \ref{ContTimeLordenDelta}} 

The result is obvious for $h=0$. For $h>0$, take $g(x)=-\max(x,1)$ and $g(x)=(1-x)^+$ in (\ref{OptProb}), to see that $T_h^c(\Delta)$ simultaneously minimizes the numerator and maximizes the denominator of (\ref{LowerBound}), over all stopping times $T\in{\calT(\Delta)}$ with $\bbe_{\infty}(T)=\gamma$. From this it follows that for any such stopping time, 
\begin{align*}
d(T,\Delta)\geq \bar\d(T,\Delta) \geq \bar d(T_h^c{(\Delta)},\Delta)=d(T_h^c{(\Delta)},\Delta),
\end{align*}
which shows that $T_h^c(\Delta)$ solves the optimization problem (\ref{ContLordenDelta})-(\ref{ContLordenDelta2}).
\hfill\qed

\medskip
Before proving Theorem \ref{ContTimeLordenThm}, we introduce two lemmas. 
The first one says that the CUSUM stopping time $T_h^c$ coincides with the first hitting time of the CUSUM process to the open set $(h,\infty)$, and that $T_h^c$ changes continuously as the barrier $h$ is increased. The second lemma states that the discretized CUSUM stopping time $T_h^c(\Delta)$ converges to $T_h^c$, as the step size $\Delta$ is reduced. Note that since $\{\bbp_{\tau},\,\tau\in\bar\bbr_0^+\}$ is a family of equivalent probability measures, almost surely in the following lemmas actually holds with respect to any of those measures. Similarly, since the L\'evy measures $\nu^{(0)}$ and $\nu^{(1)}$ are assumed to be equivalent, condition (\ref{atom}) holds for both $\nu^{(0)}$ and $\nu^{(1)}$ or neither of them. 

\begin{lem}\label{lemThc}
	Let $h>1$ and assume that under the measures $\bbp_{0}$ and $\bbp_{\infty}$, the L\'evy measure of the log-likelihood ratio process $U:=(U_t)_{t\geq 0}$, defined in (\ref{Ut}), does not have an atom at $\bar h=\log(h)$. That is,
	\begin{align}\label{atom}
	{(\nu^{(i)}\circ\varphi^{-1})}(\bar h)=\nu^{(i)}(\{x\in\bbr:\varphi(x)=\bar h\})=0,  \quad i=0,1,
	\end{align}
	{where} $\varphi=\log(d\nu^{(1)}/d\nu^{(0)})$. 
	Then the following assertions hold true almost surely, under the measures $\bbp_0$ and $\bbp_{\infty}$, 
	for the CUSUM stopping time $T_h^c$ 
	defined in (\ref{Thc}):
	\begin{enumerate}
		\item[(i)]
		$T^c_h = \tau_h := \inf\{t\geq 0: S_t> h\}$.
		\item[(ii)] 
		$T^c_{h-\epsilon}\leq T^c_h$, for any $\epsilon>0$. 
		\item[(iii)] 
		$T^c_{h-\epsilon}\to T^c_h$, as $\epsilon\downarrow 0$.
	\end{enumerate}
\end{lem}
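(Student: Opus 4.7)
Throughout I work with the drawup $Y_t=U_t-\inf_{s\leq t}U_s$, so $S_t=e^{Y_t}$, $T_h^c=\inf\{t:Y_t\geq\bar h\}$, and $\tau_h=\inf\{t:Y_t>\bar h\}$. I prove the three parts in the order (ii), (i), (iii). Part (ii) is immediate: $\{t:Y_t\geq\bar h\}\subseteq\{t:Y_t\geq\log(h-\eps)\}$ gives $T_{h-\eps}^c\leq T_h^c$.

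For (i), $T_h^c\leq\tau_h$ is trivial; for the reverse I split on $Y_{T_h^c}$. If $Y_{T_h^c}>\bar h$, right-continuity of $Y$ gives $\tau_h\leq T_h^c$. The delicate case is $Y_{T_h^c}=\bar h$, which occurs in one of two ways: (a) a jump of $U$ lands $Y$ exactly on the barrier, $\Delta U_{T_h^c}=\bar h-Y_{T_h^c-}$; or (b) continuous creeping, $Y_{T_h^c-}=\bar h$. In case (a), on the critical event $\{Y_{T_h^c-}=0\}$ (start of a new excursion from the running minimum), the required jump size is exactly $\bar h$, excluded $\bbp_\infty$- and $\bbp_0$-a.s.\ by (\ref{atom}); for intermediate values $Y_{T_h^c-}\in(0,\bar h)$, a conditioning argument on $\calF_{T_h^c-}$ combined with an atom-freeness statement for the conditional jump-size distribution (reducing again to (\ref{atom})) yields the same conclusion. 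In case (b), applying the strong Markov property at $T_h^c$, the post-$T_h^c$ increments of $U$ form an independent L\'evy process with the same triplet; standard regularity of $0$ for $(0,\infty)$ of $U$ (which holds in the scenarios compatible with continuous creeping, most notably the positive-drift case under $\bbp_0$, cf.\ Remark \ref{logThc}-(iv)) then forces $U$ to exceed $U_{T_h^c}$ in every right-neighbourhood, so $Y$ exceeds $\bar h$ immediately and $\tau_h=T_h^c$.

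For (iii), monotone convergence from (ii) gives $L:=\lim_{\eps\downarrow 0}T_{h-\eps}^c\leq T_h^c$. Since $Y_{T_{h-\eps}^c}\geq\log(h-\eps)$ and $Y$ is c\`adl\`ag, $Y_{L-}\geq\bar h$. If $L<T_h^c$, then $Y_t<\bar h$ for $t<L$ forces $Y_{L-}=\bar h$ and $Y_L<\bar h$, i.e., a downward jump of $U$ at $L$; but the continuous-motion time at which $Y$ reaches $\bar h$ is a.s.\ disjoint from the countable set of jump times of $U$, with (\ref{atom}) handling the boundary case $Y_{L-}=0$, so the event has zero probability and $L=T_h^c$.

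The main obstacle is the analysis of $\{Y_{T_h^c}=\bar h\}$ in part (i): jump-landing must be ruled out via (\ref{atom}) (with a reduction to the boundary atom statement), and continuous-creeping must be handled via strong Markov plus regularity of the origin for $U$. Once this is settled, parts (ii) and (iii) are essentially monotonicity arguments.
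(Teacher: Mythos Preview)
Your overall structure tracks the paper's, but there is a genuine gap in part (i), case (a), when $Y_{T_h^c-}\in(0,\bar h)$.

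You write that a conditioning argument on $\calF_{T_h^c-}$ reduces the exclusion of the event $\{\Delta U_{T_h^c}=\bar h-Y_{T_h^c-}\}$ to the atom condition (\ref{atom}). This does not work: condition (\ref{atom}) only says the L\'evy measure of $U$ has no atom at the single point $\bar h$, and places no restriction on atoms at $\bar h-y$ for $y\in(0,\bar h)$. If, say, the L\'evy measure has an atom at $1$ and $\bar h=2$, then (\ref{atom}) holds, yet your argument would need to rule out a jump of size $1$ from level $1$. The paper handles this instead by invoking a structural fact about L\'evy processes that are not compound Poisson: the event $\{X_{\hat\tau_x}=x,\,X_{\hat\tau_x-}<x\}$ is null for any level $x>0$ (via the ascending ladder height subordinator and Kyprianou's Lemma~5.8). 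This is the substantive input you are missing, and it cannot be recovered from (\ref{atom}) alone by conditioning. You also omit the compound Poisson case entirely; the paper treats it separately via $\Delta$-accessibility and an approximation using part (iii).

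For part (iii), your argument can be made to work but is unnecessarily delicate (and the parenthetical ``with (\ref{atom}) handling the boundary case $Y_{L-}=0$'' is misplaced, since you have just established $Y_{L-}=\bar h$). The paper's route is shorter: $T_{h-\eps}^c\uparrow L$ are stopping times, quasi-left-continuity of $U$ gives $U_{T_{h-\eps}^c}\to U_L$, monotonicity of the running infimum gives $M_{T_{h-\eps}^c}\to M_L$, hence $Y_{T_{h-\eps}^c}\to Y_L\geq\bar h$, so $L\geq T_h^c$. This avoids any analysis of jumps at $L$.
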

\noindent\textbf{Proof:} To prove (i), recall the representation (\ref{ThCnew})-(\ref{Yt}) for $T_h^c$ in terms of the drawup process $Y:=(Y_t)_{t\geq 0}$, i.e.\ the process $U:=(U_t)_{t\geq 0}$ reflected at its running minimum, and observe that the paths of $U$ can be decomposed into independent excursions from its running minimum, potentially interlaced by time intervals where the process can be described as drifting at its minimum\footnote{Such intervals, contributing to the Lebesgue measure of the time the process spends at its minimum, are not restricted to processes with a compound Poisson jump component. 
	For instance, any spectrally positive and bounded variation L\'evy process $X$, with generating triplet $(0,b,\nu)$, can drift at its minimum 
	if $d=b-\int_{|x|\leq 1}x\nu(dx)<0$, because in that case $X_t/t\to d$ a.s.\ as $t\to 0$ (cf.\ \cite{Sato}, p.\ 323).}. Then note that $Y_{T_h^c}\geq \bar h$, and that if $U$ is not a compound Poisson process, then 
the process $Y$ can cross the barrier $\bar h$ in two different ways, {which we now proceed to describe.}



First, $Y$ is said to creep through the barrier if $Y_{\tau_h^c}=\bar h$. 
If $U$ is an infinite variation L\'evy process, then $T_h^c=\tau_h$ follows from the strong Markov property and the point 0 being regular for $(0,\infty)$, which makes $T_h^c<\tau_h$ impossible. If $U$ is a bounded variation L\'evy process, the same argument can be used because the point 0 is regular for $(0,\infty)$ when the drift {$d^{(i)}=b^{(i)}-\int_{|x|\leq 1}x\nu^{(i)}(dx)$} of the process is positive, which is also a necessary condition for a bounded variation process to creep through a barrier with a positive probability (see Thms.\ 6.5 and 7.11 in \cite{Kyprianou}).

Second, $Y$ can cross the barrier by jumping. However, since $Y=0$ during the intermediate times when $U$ is at its minimum, 
condition (\ref{atom}) ensures that $Y$ {cannot} jump straight to the barrier $\bar h$, {so in that case $T^c_h = \tau_h$}. 
Similarly, during an excursion of $U$ from its running minimum, $Y$ breaches the barrier $\bar h$ by overshooting it, so $T^c_h = \tau_h$.
This is because for a L\'evy process $X$ that is not a compound Poisson process, $\{X_{\hat\tau_x}=x,X_{\hat\tau_x-}<x\}$ is a null event, where for $x>0$, $\hat\tau_x=\inf\{t\geq 0:X_t\geq x\}$. In other words, 
$X$ {cannot} strike a given barrier from a position strictly below it. This follows from \cite[Lem.\ 5.8]{Kyprianou} when $X$ is a subordinator, while for a general L\'evy process $X$ it holds because the range of the running maximum process, 
$\bar X_t:=\sup_{0\leq s\leq t}X_t$, coincides almost surely with the range of the ascending ladder heights process of $X$, which is a subordinator and cannot jump to the level $x$ from below it (cf.\ \cite[p.\ 219]{Kyprianou}). 

Finally, we consider the case when $U$ is a compound Poisson process, which happens when $X$ is a compound Poisson process with the same pre- and post-change jump intensity, but a different jump size distribution (see Eq.\ \ref{Upoi}). In this case condition (\ref{atom}) ensures that $Y$ cannot hit the barrier $\bar h$ starting from zero, but $Y$ can potentially do so in a finite number of jumps. 
Then $\bar h$ is said to be $\Delta$-accessible (cf.\ \cite{Picard}), but the number of such points is finite or countable,
so we can find a sequence $(\epsilon_n)_{n\geq 1}$ such that $\epsilon_n\downarrow 0$ and $\bar h-\epsilon_n$ is not $\Delta$-accessible. For such points it is clear that $T_{h-\epsilon_n}^c=\tau_{h-\epsilon_n}$, and by part (iii) of this lemma we have $T_{h-\epsilon_n}^c\to T_h^c$ as $n\to\infty$, and it follows that $T_h^c=\tau_h$.

To show (ii), note that $[h,\infty)\subset[h-\epsilon,\infty)$ for any $\epsilon>0$, so $T_{h-\epsilon}^c$ is an increasing sequence of stopping times, and $T_{h-\epsilon}^c\leq T_h^c$, for all $\epsilon>0$. Thus, the limit $T=\lim_{\epsilon\to 0}T_{h-\epsilon}^c$ is a stopping time and  $T\leq T_h^c$. {Due to quasi-left-continuity} of L\'evy processes we have $Y_{T_{h-\epsilon}^c}\to Y_{T}$ almost surely, as $\epsilon\to 0$, and since $Y_{T_{h-\epsilon}^c}\in[h-\epsilon,\infty)$ it follows that $Y_T\in [h,\infty)$, and therefore $T_h^c\leq T$. We conclude that $T=T_h^c$, so $T_{h-\epsilon}^c\to T_h^c$, as $\epsilon\to 0$, which proves (iii). Note that condition (\ref{atom}) is not needed for (ii) and (iii) to be satisfied.
\hfill\qed

\medskip

\begin{lem}\label{lemConvStop}
	Let $h>1$ and $(\Delta_n)_{n\geq 1}$ be such that $\Delta_n\bbz_0^+\subset\Delta_{n+1}\bbz_0^+$ for all $n\geq 1$, and assume that condition (\ref{atom}) is satisfied. 
	Then the following assertions hold true almost surely, under the measures $\bbp_0$ and $\bbp_{\infty}$, 
	for the CUSUM stopping time $T_h^c$ defined in (\ref{Thc}), and the stopping times $(T_h^c(\Delta_n))_{n\geq 1}$ defined in (\ref{ContTimeLordenCusum}):
	\begin{enumerate}
		\item[(i)]
		$T_h^c \leq T_h^c(\Delta_{n+1}) \leq T_h^c(\Delta_n),\quad  n\geq 1$. 
		\item[(ii)]
		$T_h^c(\Delta_n)\to T_h^c,\quad n\to\infty$.
	\end{enumerate}
\end{lem}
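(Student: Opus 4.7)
\medskip
\noindent\textbf{Proof plan.} For part (i), the plan is to use monotonicity with respect to the indexing set of the supremum. By definition $S_k(\Delta_n) = \sup_{0 \leq m < k} L_{k\Delta_n}^{(m\Delta_n)}$ takes the sup over a discrete subset of $[0, k\Delta_n]$, so $S_k(\Delta_n) \leq S_{k\Delta_n}$, from which $T_h^c \leq T_h^c(\Delta_n)$ follows at once. The nesting hypothesis $\Delta_n\bbz_0^+ \subset \Delta_{n+1}\bbz_0^+$ says that each grid time $k\Delta_n$ equals some $k'\Delta_{n+1}$, and the finer grid realises the sup over a larger discrete set, giving $S_k(\Delta_n) \leq S_{k'}(\Delta_{n+1})$ and hence $T_h^c(\Delta_{n+1}) \leq T_h^c(\Delta_n)$.

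For part (ii), we will assume $\Delta_n \to 0$ (otherwise the nesting forces $\Delta_n$ to be eventually constant, a trivial case). By (i) the limit $T^* := \lim_n T_h^c(\Delta_n) \geq T_h^c$ exists, and we need to show $T^* \leq T_h^c$. The plan is, for any $t' \geq T_h^c$ with $Y_{t'} > \bar h$ --- which exists by Lemma \ref{lemThc}(i), applicable thanks to the standing no-atom assumption \eqref{atom} --- to exhibit grid times $k_n \Delta_n \to t'$ at which $Y_{k_n}^{(\Delta_n)} := U_{k_n\Delta_n} - \min_{0\leq m < k_n} U_{m\Delta_n} \geq \bar h$, pinning $T_h^c(\Delta_n) \leq k_n \Delta_n$ and, on letting $t' \downarrow T_h^c$, yielding $T^* \leq T_h^c$. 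To build these approximations we pick $s^* \in [0, t']$ with $U_{s^*}$ within $\eta/3$ of $\underline{U}_{t'} := \inf_{s \leq t'} U_s$, where $\eta := Y_{t'} - \bar h > 0$, and then use the right-continuity of $U$ together with the density of $\bigcup_n \Delta_n\bbz_0^+$ in $[0,\infty)$ to produce $m_n\Delta_n \downarrow s^*$ and $k_n\Delta_n \downarrow t'$ with $m_n < k_n$, $U_{m_n\Delta_n} \to U_{s^*}$, and $U_{k_n\Delta_n} \to U_{t'}$; the difference $U_{k_n\Delta_n} - U_{m_n\Delta_n}$ then exceeds $\bar h$ for large $n$ and bounds $Y_{k_n}^{(\Delta_n)}$ from below.

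The main obstacle is the potential failure of the previous step when the running minimum of $U$ on $[0, t']$ is attained only by a downward jump exactly at $t'$: in that situation the discrete minimum $\min_{m < k_n} U_{m\Delta_n}$ sees no value of $U$ at or to the right of the minimiser, since $m < k_n$ excludes $m\Delta_n \geq t'$. The key observation rescuing the argument is that the hypothesis $h > 1$, i.e.\ $\bar h > 0$, rules this pathology out automatically: if $s^* = t'$ were forced, then $\underline{U}_{t'} = U_{t'}$ and hence $Y_{t'} = 0$, contradicting $Y_{t'} > \bar h > 0$. Consequently $s^* < t'$ can always be arranged, and the approximation $U_{m_n\Delta_n} \to U_{s^*}$ requires only the right-continuity of $U$, with no control needed over its left limits at $t'$. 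Condition \eqref{atom} itself enters only indirectly, through its use in Lemma \ref{lemThc}(i).
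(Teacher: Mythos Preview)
Your proof is correct and follows essentially the same route as the paper: both arguments pass to the log-likelihood drawup representation $Y_t=U_t-\inf_{s\le t}U_s$, invoke Lemma \ref{lemThc}(i) to obtain a time $t'$ just after $T_h^c$ at which $Y_{t'}>\bar h$, and then use right-continuity of $U$ to approximate both $t'$ and a near-minimiser $s^*<t'$ by grid points, concluding that the discrete drawup exceeds $\bar h$ for large $n$. The paper packages the approximation step as pointwise convergence $Y^{(\Delta_n)}_{t_0}\to Y_{t_0}$ on the dense grid and then specialises to $t_0=t_\epsilon$, whereas you go straight to the particular $t'$ needed; your explicit treatment of the edge case ``minimum attained only at $t'$'' (ruled out by $Y_{t'}>\bar h>0$) and your remark that one must have $\Delta_n\to 0$ are points the paper leaves implicit.
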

\noindent\textbf{Proof:} Assertion (i) is clear from the definitions of $T_h^c$ and $T_h^c(\Delta_n)$. 
To show (ii), recall, as in the {proof of the} previous lemma, the representation (\ref{ThCnew})-(\ref{Yt}) for $T_h^c$ in terms of the drawup process $Y$, 
and write $T_h^c(\Delta_n)$ in a similar way as
\begin{align*}
T_h^c(\Delta_n) = \inf\{k\Delta_n\geq 0:S_k(\Delta_n)\geq h\}
=\inf\{k\Delta_n\geq 0: Y^{(\Delta_n)}_{k\Delta_n}\geq\bar\h\},
\end{align*}
where $\bar h=\log(h)$, and the discretized drawup process is defined by
\begin{align*}
Y^{(\Delta_n)}_{k\Delta_n} 
:= U_{k\Delta_n} - \inf_{0\leq m<k} U_{m\Delta_n},\quad k\geq 0.
\end{align*}
Since $U$ is a L\'evy process, its trajectories are c\`adl\`ag, and it follows that the trajectories of $Y$ {and 
	$M_t:=\inf_{s\leq t}U_s$,}
are c\`adl\`ag as well. The process $(Y_{k\Delta_n}^{(\Delta_n)})_{k\geq 0}$ can also be extended to a piecewise constant c\`adl\`ag process by {defining}
\begin{align*}
Y_t^{(\Delta_n)} := Y_{k_t^{(n)}\Delta_n}^{(\Delta_n)},\quad t\geq 0,
\end{align*}
where\footnote{For $x\in\bbr$, $\lfloor x\rfloor:=\sup\{z\in\bbz: z\leq x\}$ and $\lceil x\rceil:=\inf\{z\in\bbz: z\geq x\}$.}
$k_t^{(n)}:=\lfloor t/\Delta_n\rfloor$, and we now show that 
\begin{align}\label{Yconv}
\forall t\in\cup_{n\geq 1}\Delta_n\bbz_0^+:\quad Y_t^{(\Delta_n)}\stackrel[]{\text{a.s.}}{\longrightarrow} \,Y_t,\quad n\to\infty.
\end{align} 
Indeed, for a fixed $t_0\in\cup_{n\geq 1}\Delta_n\bbz_0^+$ we have $k_{t_0}^{(n)}\Delta_n=t_0$ for $n$ big enough,  
so $U_{k_{t_0}^{(n)}\Delta_n}=U_{t_0}$. The definition of $Y_{t_0}^{(\Delta_n)}$ then shows that a sufficient condition for the convergence $Y_{t_0}^{(\Delta_n)}\stackrel[]{\text{a.s.}}{\longrightarrow} \,Y_{t_0}$ is given by
\begin{align*}
M_{t_0}^{(\Delta_n)}:={\inf_{0\leq m<k_{t_0}^{(n)}}}U_{m\Delta_n}\stackrel[]{\text{a.s.}}{\longrightarrow} \, \inf_{0\leq s\leq {t_0}}U_s = M_{t_0}, \quad n\to \infty. 
\end{align*}
The definition of $M_{t_0}$ and the right-continuity of the process $U$ show that for any $\epsilon>0$, there exist $s_{\epsilon}\in[0,{t_0}]$ and $N_{\epsilon}\in\bbn$ such that $\s_{\epsilon}\in\Delta_n\bbz_0^+$ for all $n\geq N_{\epsilon}$,
and such that $U_{\s_{\epsilon}}<M_{t_0}+\epsilon$. It follows that $M_{t_0}^{(\Delta_n)}<M_{t_0}+\epsilon$, for all $n\geq N_{\epsilon}$, which implies that $M_{t_0}^{(\Delta_n)}\stackrel[]{\text{a.s.}}{\longrightarrow}  M_{t_0}$, as $n\to\infty$, and therefore $Y_{t_0}^{(\Delta_n)}\,\stackrel[]{\text{a.s.}}{\longrightarrow} \,Y_{t_0}$. The convergence (\ref{Yconv}) then follows from  the fact that a countable union of almost sure events is also almost sure.  

Now we show that $T_h^c(\Delta_n)\stackrel[]{\text{a.s.}}{\longrightarrow} T_h^c$, as $n\to\infty$, i.e.\ that the hitting time of $(Y_t^{(\Delta_n)})_{t\geq 0}$ to the set $[\bar h,\infty)$ converges to the corresponding hitting time of $Y$. 
By Lemma \ref{lemThc}-(i) and the right-continuity of $Y$, for any $\epsilon>0$ there exists $t_{\epsilon}\in[T_h^c,T_h^c+\epsilon)$ such that $t_{\epsilon}\in\Delta_n\bbz_0^+$ for any $n$ greater than some $N_{t_{\epsilon}}\in\bbn$, and such that $Y_{t_{\epsilon}}>\bar\h$. By (\ref{Yconv}), $Y_{t_{\epsilon}}^{(\Delta_n)}\to Y_{t_{\epsilon}}$, as $n\to\infty$, so there exists $\N_{\epsilon}\in\bbn$ such that 
$Y^{(\Delta_n)}_{t_{\epsilon}}>\bar\h$, for $n\geq \N_{\epsilon}$. Thus, $T_h^c(\Delta_n)<T_h^c+\epsilon+\Delta_n$ for any $N\geq N_{\epsilon}$, which implies that $T_h^c(\Delta_n)\to T_h^c$, as $n\to\infty$. 
\hfill\qed

\medskip
We are now ready to prove Theorem \ref{ContTimeLordenThm} and thus show that the CUSUM stopping time $T_h^c$ solves the continuous-time version of Lorden's change-point problem.
\smallskip 

\noindent\textbf{Proof of Theorem \ref{ContTimeLordenThm}}

\noindent 
Let $(\Delta_n)_{n\geq 1}$ and $(T_h^c(\Delta_n))_{n\geq 1}$ be as in Lemma \ref{lemConvStop}, and assume that condition (\ref{atom}) is satisfied. It is shown in the proof of Lemma \ref{LowerBoundLemma} (see (\ref{equalizer}) therein) that
\begin{align}\label{dThcDelta}
d(T_h^{c}(\Delta_n),\Delta_n) 
&=\sup_{k\geq 1}{\esssup}\,
\bbe_{(k-1)\Delta_n}\big(\big.\big(T_h^c(\Delta_n)-(k-1)\Delta_n\big)^{+}\big|\calF_{(k-1)\Delta_n}\big)
=\bbe_0(T_h^{c}(\Delta_n)),
\end{align}
and a similar identity can be established for $T_h^c$. To see that, first note that from (\ref{LstRatio}) it follows that $L_{t}^{(\tau)}=L_{s}^{(\tau)}\cdot L_{t}^{(s)}$, for any $\tau\leq s\leq t$,
so
\begin{align}\label{StMax}
S_t &= \max\big(S_{\tau}\,L_{t}^{(\tau)},\sup_{\tau\leq s\leq t}L_{t}^{(s)}\big).
\end{align}
Since $(U_t)_{t\geq 0}$ is adapted to the filtration $(\calF_t)_{t\geq 0}$ generated by $X$, it is clear from
(\ref{LstU}) and (\ref{Ut}) 
that $L_{t}^{(s)}=e^{U_t-U_s}$, for $s\in[\tau,t]$, is measurable with respect to the filtration generated by $({X}_s-{ X}_{\tau})_{\tau\leq s{\leq} t}$, and independent of $\calF_{\tau}$. Hence, (\ref{StMax}) shows that for fixed $({X}_s-{X}_{\tau})_{\tau\leq s{\leq}t}$, $S_t$ is a non-decreasing function of $S_{\tau}$, which implies that on $\{T_h^c\geq \tau\}$, $T_h^c$ is a non-increasing function of $S_{\tau}\in\calF_{\tau}$. Thus, since $S_{\tau}\geq 1$, 
\begin{align}\label{dThc}
d^c(T_h^c) & = \sup_{\tau>{}0}{\esssup}\,\bbe_{\tau}\big(\big.\big(T_h^c-\tau\big)^{+}\big|\calF_{\tau}\big)
 = \sup_{\tau>{}0}{\esssup}\,\bbe_{\tau}\big(\big.\big(T_h^c-\tau\big)^{+}\big|S_{\tau}= 1\big)
 = \bbe_0(T_h^c),
\end{align}
where the third equality follows from the homogeneous Markov property of $(S_t)_{t\geq 0}$.
Using (\ref{dThcDelta}) and (\ref{dThc}), as well as Lemma \ref{lemConvStop}-(i), now yields
\begin{align}\label{CusumLimit}
d^c(T_h^c) = \bbe_0\big(T_h^c\big) \leq \liminf_n \bbe_0\big(T_h^{c}(\Delta_n)\big) = \liminf_n d(T_h^{c}(\Delta_n),\Delta_n),
\end{align}
and, furthermore, by Lemma \ref{lemConvStop}-(i) and (ii), 
and the monotone convergence theorem, 
\begin{align}\label{gammaConv}
\gamma_n:=\bbe_{\infty}(T_h^c(\Delta_n))
\searrow \bbe_{\infty}(T_h^c)=\gamma,\quad n\to\infty.
\end{align}
Next, for a fixed $T\in\calT_{\gamma}$, define the stopping times
\begin{align*} 
T_n:=\Big\lceil\frac{T}{\Delta_n}\Big\rceil\Delta_n+\Big\lceil\frac{\gamma_n-\gamma}{\Delta_n} \Big\rceil\Delta_n, \quad n\geq 1,
\end{align*}
which belong to $\calT_{\gamma_n}(\Delta_n)$, 
so, by Proposition \ref{ContTimeLordenDelta},
\begin{align}\label{bla}
d(T_h^{c}(\Delta_n),\Delta_n)\leq d(T_n,\Delta_n),\quad n\geq 1.
\end{align}
Moreover, using $T_n \leq T + (1+\eta_n)\Delta_n$, where $\eta_n:=\lceil{(\gamma_n-\gamma)}/{\Delta_n} \rceil$, we have
\begin{align*}
d(T_n,\Delta_n)
& = \sup_{m\geq 0}{\rm ess sup}\,\bbe_{m\Delta_n}\big(\big.\big(T_n-m\Delta_n\big)^{+}\big|\calF_{m\Delta_n}\big)\\
& \leq \sup_{m\geq 0}{\rm ess sup}\,\bbe_{m\Delta_n}\big(\big.\big( T-m\Delta_n\big)^{+}\big.|\calF_{m\Delta_n}\big)+(1+\eta_n)\Delta_n\\
& \leq \sup_{\tau\geq 0}{\rm ess sup}\,\bbe_{\tau}\big(\big.\big( T-\tau\big)^{+}\big|\calF_{\tau}\big)+(1+\eta_n)\Delta_n\\
& = d^c(T) + (1+\eta_n)\Delta_n\\
& \to d^c(T), \quad n\to\infty,
\end{align*}
since $\eta_n\Delta_n\leq \gamma_n-\gamma+\Delta_n\to 0$, as $n\to\infty$, because of (\ref{gammaConv}). This implies that
\begin{align}\label{LimsupIneq}
\limsup_n d(T_n,\Delta_n) \leq d^c(T),
\end{align}
which, together with (\ref{CusumLimit}) and (\ref{bla}), shows that
\begin{align*}
d^c(T_h^c) \leq \liminf_nd(T_h^{c}(\Delta_n),\Delta_n) \leq \limsup_nd(T_h^{c}(\Delta_n),\Delta_n) \leq \limsup_n d(T_n,\Delta_n) \leq d^c(T).
\end{align*}
In other words, for a given $T\in\calT_{\gamma}$ we have $d^c(T_h^c)\leq d^c(T)$, 
which concludes the proof when condition (\ref{atom}) of Lemma \ref{lemConvStop} is satisfied. 

If (\ref{atom}) is not satisfied, consider a sequence $(\epsilon_n)_{n\geq 1}$ such that $\epsilon_n\downarrow 0$ as $n\to\infty$, and such that the L\'evy measures {$\nu^{(i)}\circ\varphi^{-1}$} 
do not have an atom at $\bar h-\epsilon_n$, i.e.\
$\nu^{(i)}(\varphi^{-1}(\bar h-\epsilon_n))=0$, $i=0,1$, for all $n\geq 1$. This is possible because L\'evy measures are $\sigma$-finite and therefore have at most countably many atoms. In that case we have shown that $d^c(T_{h-\epsilon_n}^c)\leq d^c(T)$, for any $T\in\calT_{\gamma_{h-\epsilon_n}}$, with $\gamma_{h-\epsilon_n}:=\bbe_{\infty}(T_{h-\epsilon_n}^c)\leq \gamma$. Since $\calT_{\gamma}\subseteq \calT_{\gamma_{h-\epsilon_n}}$, it follows that $d^c(T_{h-\epsilon_n}^c)\leq d^c(T)$ is in particular true for any $T\in\calT_{\gamma}$. To complete the proof it is therefore sufficient to show that $d^c(T_{h-\epsilon_n}^c)\to d^c(T_{h}^c)$ as $n\to\infty$, which follows from Lemma \ref{lemThc}-(iii) and the dominated convergence theorem, because $d^c(T_{h-\epsilon_n}^c) = \bbe_0(T_{h-\epsilon_n}^c)$ and $d^c(T_{h}^c)=\bbe_0(T_{h}^c)$, by (\ref{dThc}).
\hfill\qed

\appendix

\section{Additional proofs}\label{appendixCP}

\noindent\textbf{Proof of (\ref{LstRatio})}

\noindent The definition of $L_{t}^{(\tau)}$ entails that
\begin{align}
\bbp_{\tau}(B) = \bbe_{\infty}({\bf 1}_{B}L_{t}^{(\tau)}), \quad \forall \,B\in\calF_{t},
\end{align}
so to prove (\ref{LstRatio}) it is sufficient to show that
\begin{align}\label{eqn0}
\bbp_{\tau}(B) = \bbe_{\infty}\big({\bf 1}_{B}\frac{L_{t}^{(0)}}{L_{\tau}^{(0)}}\big),\quad \forall B\in\calF_{t}.
\end{align}
First assume that $B\in\calF_{t}$ is of the form
\begin{align}\label{simple}
B = \{X_{t_1}\in A_1,\dots,X_{t_k}\in A_k,\dots,X_{t_n}\in A_n\},
\end{align}
for some $n\geq 1$, $0\leq t_1<\dots<t_{k-1}\leq\tau<t_k<\dots<t_n \leq t$, and Borel sets $A_1,\dots,A_n$, and write $B=B_{\tau}\cap B_{t-\tau}$, {where 
	\begin{align}\label{Bdecomp}
	B_{\tau}=\{X_{t_1}\in A_1,\dots,X_{t_{k-1}}\in A_k\}\in\calF_{\tau},\quad B_{t-\tau}=\{X_{t_k}\in A_k,\dots,X_{t_n}\in A_n\}.
	\end{align} 
	Then,} using the definitions of $L_{t}^{(0)}$ and $L_{\tau}^{(0)}$, and $\bbe_{0}({\bf 1}_{B}|\calF_{\tau})\in\calF_{\tau}$, we have
\begin{align}\label{eqn1}
\bbe_{\infty}\big({\bf 1}_{B}\frac{L_{t}^{(0)}}{L_{\tau}^{(0)}}\big)
&= \bbe_{0}\big({\bf 1}_{B}\frac{1}{L_{\tau}^{(0)}}\big)\nonumber\\
&= \bbe_0\big.\big(\frac{1}{L_{\tau}^{(0)}}\bbe_{0}({\bf 1}_{B}|\calF_{\tau})\big)\nonumber\\
&= \bbe_{\infty}\big.\big(\bbe_{0}({\bf 1}_{B}|\calF_{\tau})\big)\nonumber\\
&=\bbe_{\tau}\big.\big(\bbe_{0}({\bf 1}_{B}|\calF_{\tau})\big),
\end{align}
where the final equality {above uses} that $\bbp_{\infty}|_{\calF_{\tau}}=\bbp_{\tau}|_{\calF_{\tau}}$.
Next, recall the definition of $B_{t-\tau}$ from (\ref{Bdecomp}) and note that
\begin{align}
\bbe_{0}({\bf 1}_{B_{t-s}}|\calF_{s})
&= \bbe_{0}({{\bf 1}_{\{X_{t_k}\in A_k,\dots,X_{t_n}\in A_n\}}}|\calF_{\tau})\nonumber\\
&= \bbe_{0}({{\bf 1}_{\{X_{t_k}-X_{\tau}\in A_k{(X_{\tau})},\dots,X_{t_n}-X_{\tau}\in A_n{(X_{\tau})}\}}}|\calF_{ s})\nonumber\\
&= \bbe_{s}({{\bf 1}_{\{X_{t_k}-X_{\tau}\in A_k{(X_{\tau})},\dots,X_{t_n}-X_{\tau}\in A_n{(X_{\tau})}\}}}|\calF_{ s})\nonumber\\
&= \bbe_{s}({{\bf 1}_{\{X_{t_k}\in A_k,\dots,X_{t_n}\in A_n\}}}|\calF_{\tau})\nonumber\\
&=\bbe_{s}({\bf 1}_{B_{t-\tau}}|\calF_{\tau}),
\end{align}
where {$A{(x)}:=A-x$}, for a Borel set A and $x\in\bbr$, and the third equality follows from the quasi-left-continuity of $X$ at $\tau$, and the fact that $X$ has independent increments, so $X_{t_i}-X_{\tau}$ has the same distribution under $\bbp_0$ and $\bbp_{\tau}$, for $i=k,\dots,n$. Continuing from (\ref{eqn1}), we thus obtain
\begin{align}\label{eqn2}
\bbe_{\tau}(\bbe_{0}({\bf 1}_{B}|\calF_{\tau}))
&=\bbe_{\tau}({\bf 1}_{B_{\tau}}\bbe_{0}({\bf 1}_{B_{t-\tau}}|\calF_{\tau}))\nonumber\\
&=\bbe_{\tau}({\bf 1}_{B_{\tau}}\bbe_{\tau}({\bf 1}_{B_{t-\tau}}|\calF_{\tau}))\nonumber\\
&= \bbp_{\tau}(B),
\end{align}
and together (\ref{eqn1}) and (\ref{eqn2}) show that (\ref{eqn0}) holds for events in $\calF_{t}$ of the form (\ref{simple}). By noting that those events form a $\pi$-system, the monotone class theorem can then be used to extend the result to any $B\in\calF_{t}$.
\hfill\qed

\medskip

\noindent\textbf{Proof of Lemma \ref{LowerBoundLemma}}

\noindent We first introduce the following notation for the performance of a given stopping time $T\in\calT(\Delta)$: \begin{align*}
d_k(T,\Delta)&:={\esssup}\,b_k(T,\Delta),\quad k\geq {0},\\
b_k(T,\Delta)&:= \bbe_{k\Delta}\big(\big(\T-{k}\Delta\big)^{+}|\calF_{k\Delta}\big),\quad k\geq {0},
\end{align*}
so $d(T,\Delta) = \sup_{k\geq{}{0}}d_k(T,\Delta)$. Then $d(T,\Delta) \geq d_k(T,\Delta) \geq b_k(T,\Delta)$, $\bbp_{\infty}$-a.s. and, thus\footnote{Here we simply use the fact that $\bbe(XY)\leq{}{\esssup}(X)\bbe(Y)$, $\bbp$-a.s., for any random variables $X$ and $Y$ defined on a generic probability space $(\Omega,\calF,\bbp)$.},
\begin{align}\label{LowerBoundIneq}
d(T,\Delta)\sum_{k={0}}^{\infty}\bbe_{\infty}\Big({\bf 1}_{\{T>k\Delta \}}\big(1-S_{k}(\Delta)\big)^{+}\Big) 
\geq
\sum_{k={0}}^{\infty}\bbe_{\infty}\Big(b_{k}(T,\Delta){\bf 1}_{\{T> k\Delta\}}\big(1-S_{k}(\Delta)\big)^{+}\Big).
\end{align}
Using $0<\bbe_{\infty}(T)<\infty$ and the monotone convergence theorem, the sum on the left-hand side can be written as
\begin{align}\label{lhs}
0<\bbe_{\infty}\Big(\sum_{k=0}^{{T/\Delta}-1}(1-S_{k}(\Delta))^{+}\Big)\leq\bbe_{\infty}({T}/{\Delta})<\infty.
\end{align}
For the right-hand side of (\ref{LowerBoundIneq}), we first write
\begin{align*}
b_k(T,\Delta) & = \bbe_{k\Delta}\big(\big(\T-{k}\Delta\big)^{+}|\calF_{k\Delta}\big)\\
& = {\Delta}\sum_{m={k+1}}^{\infty}\bbe_{k\Delta}({\bf 1}_{\{T\geq m\Delta\}}|\calF_{k\Delta})\\
& = {\Delta}\sum_{m={k+1}}^{\infty}\bbe_{\infty}\Big(\big.\prod_{l={k+1}}^{m-1}{L_l(\Delta)}{\bf 1}_{\{T\geq m\Delta\}}\big|\calF_{k\Delta}\Big)\\
& = {\Delta}\bbe_{\infty}\Big(\sum_{m={k+1}}^{{T/\Delta}}\big.\prod_{l={k+1}}^{m-1}{L_l(\Delta)}\big|\calF_{k\Delta}\Big),
\end{align*} 
with $L_l(\Delta)$ defined in (\ref{Sk}). Then, from the measurability of ${\bf 1}_{\{T> k\Delta\}}$ and $(1-S_{k}(\Delta))^{+}$ with respect to $\calF_{k\Delta}$, 
\begin{align}\label{rhs}
\sum_{k={0}}^{\infty}\bbe_{\infty}\Big(b_k(T,\Delta){\bf 1}_{\{T> k\Delta\}}(1-S_{k}(\Delta))^{+}\Big)
&= {\Delta}\bbe_{\infty}\Big(\sum_{k={0}}^{{T/\Delta}{-1}}(1-S_{ k}(\Delta))^{+}\sum_{m=k{+1}}^{{T/\Delta}}\prod_{l=k{+1}}^{m-1}L_l(\Delta)\Big)\nonumber\\
&= {\Delta}\bbe_{\infty}\Big(\sum_{m=1}^{{T/\Delta}}\sum_{k={0}}^{m-1}(1-S_{ k}(\Delta))^{+}\prod_{l=k{+1}}^{m-1}L_l(\Delta)\Big)\nonumber\\
&= {\Delta}\bbe_{\infty}\Big(\sum_{m=0}^{T/\Delta-1}\max(S_m(\Delta),1)\Big),
\end{align}
where the last step uses the identity
\begin{align} \max\left(S_m(\Delta),1\right)
=\sum_{k=0}^{m}(1-S_{k}(\Delta))^{+}\prod_{l=k+1}^{m}L_l(\Delta),
\end{align} 
which can easily be shown by induction, using the identity $\max(x,1)=x+(1-x)^+$, which holds for all $x\in\bbr$, and the recursive formula (\ref{Sk}). The inequality in (\ref{LowerBound}) now follows from (\ref{LowerBoundIneq})-(\ref{rhs}). 

To show that the inequality becomes an equality for $T_h^c(\Delta)$, note that from the recursive formula (\ref{Sk}) it follows that for $n\geq k$, and for fixed $(L_m(\Delta))_{k< m\leq n}$, $S_n(\Delta)$ is an increasing function of $\max(S_{k}(\Delta),1)$. Therefore, on the event $\{T_h^c(\Delta)\geq k\Delta\}$, $T_h^c(\Delta)$ is a non-increasing function of $\max(S_{k}(\Delta),1)$. Using that, and the homogeneous Markov property of $(S_k(\Delta))_{k\geq 1}$, we obtain
\begin{align}\label{equalizer}
d_k(T_h^c(\Delta),\Delta) & = {\esssup}\,\bbe_{k\Delta}((T_h^c(\Delta)-{k}\Delta)^{+}|\calF_{k\Delta})\nonumber\\
& = {\esssup}\,\bbe_{k\Delta}((T_h^c(\Delta)-{k}\Delta)^{+}|S_{k}(\Delta)\leq 1)\nonumber\\
& = {\esssup}\,\bbe_{0}(T_h^c(\Delta))\nonumber\\
& = d_0(T_h^c(\Delta),\Delta).
\end{align}
From this it follows that $d(T_h^c(\Delta),\Delta)=d_0(T_h^c(\Delta),\Delta)$, and moreover, 
\begin{align}\label{Ed}
\bbe_{\infty}(b_k(T_h^c(\Delta),\Delta){\bf 1}_{\{T_h^c(\Delta)\geq k\Delta\}}(1-S_{k}(\Delta))^{+}) 
= d(T_h^c(\Delta),\Delta)\bbe_{\infty}({\bf 1}_{\{T_h^c(\Delta)\geq k\Delta\}}(1-S_{k}(\Delta))^{+}),
\end{align}
since, by the same arguments as used to show (\ref{equalizer}), 
\begin{align*}
b_k(T_h^c(\Delta),\Delta) 
=\bbe_{k\Delta}\big(\big(T_h^c(\Delta)-{k}\Delta\big)^{+}|S_{k}(\Delta)\leq 1\big)
=d_0(T_h^c(\Delta),\Delta)
=d(T_h^c(\Delta),\Delta),\quad k\geq 1,
\end{align*}
on the event $\{T_h^c(\Delta)\geq k\Delta,S_{k}(\Delta)\leq 1\}$. From (\ref{Ed}) it now follows that for $T_h^c(\Delta)$ the inequality in (\ref{LowerBoundIneq}) becomes an equality.
\hfill\qed

\medskip
\noindent\textbf{Proof of Proposition \ref{OptProbProp}}

\noindent By assumption, $g(0)<\infty$, and we can also assume that $g$ is bounded from below. Otherwise, we can replace $g$ with $\bar g$ given by $\bar g(x)=\max(g(x),g(h))$, because 
\begin{align*}
\bbe_{\infty}\Big(\sum_{k=0}^{T/{\Delta}-1} g(S_k(\Delta))\Big)\leq \bbe_{\infty}\Big(\sum_{k=0}^{T/{\Delta}-1}\bar g(S_k(\Delta))\Big),
\end{align*} 
with equality when $T=T_h^c(\Delta)$. {We can also assume that $g(z_0)>g(h)$, where $z_0:=\essinf L_1(\Delta)\geq 0$, with $L_1(\Delta)$ defined in (\ref{Sk}). Otherwise, if $g(z_0)\leq g(h)$, then
	\begin{align*}
	 \bbe_{\infty}\Big(\sum_{k=0}^{T/{\Delta}-1}{\g(S_k(\Delta))}\Big)
	 \leq g(0) + g(z_0)(\bbe_{\infty}(T/\Delta) -1)
	 \leq g(0) + g(z_0)(\gamma/\Delta-1),
	\end{align*}
	 for any $T\in\calT(\Delta)$ such that $\bbe_{\infty}(T)=\gamma$, 
	with equality for $T=T_h^c(\Delta)$. }
	
	In the sequel we therefore assume that $g(z_0)>g(h)$, 
	and we reduce the problem to an unconstrained optimization problem. For $s>0$, let $(S_k^{(s)}(\Delta))_{k\geq 0}$ be defined as in {(\ref{Sk})} for $k\geq 1$, and $S_0^{(s)}(\Delta)=s$. Then, for any $\lambda\in\bbr$, define
\begin{align}\label{UnconstrainedOptProb}
V(s,\lambda):=\sup_{T\in\calT(\Delta)}\bbe_{\infty}\Big(\sum_{k=0}^{T/{\Delta}-1}{(\g(S_k^{(s)}(\Delta))-\lambda)}\Big).
\end{align}
It is sufficient to show that $T_h^c(\Delta)$ solves this problem for some $\bar\lambda\in\bbr$, because then, for $T\in\calT_{\gamma}(\Delta)$,
\begin{align*}
\bbe_{\infty}\Big(\sum_{k=0}^{T_h^c(\Delta)/{\Delta}-1}{(\g(S_k^{(s)}(\Delta))-\bar\lambda)}\Big) \geq \bbe_{\infty}\Big(\sum_{k=0}^{T/{\Delta}-1}{(\g(S_k^{(s)}(\Delta))-\bar\lambda)}\Big),
\end{align*}
{and so} 
\begin{align*} \bbe_{\infty}\Big(\sum_{k=0}^{T_h^c(\Delta)/{\Delta}-1}{\g(S_k^{(s)}(\Delta))}\Big)-\gamma\bar\lambda{/\Delta}  \geq \bbe_{\infty}\Big(\sum_{k=0}^{T/{\Delta}-1}{\g(S_k^{(s)}(\Delta))}\Big)-\gamma\bar\lambda{/\Delta},
\end{align*}
{or equivalently,} 
\begin{align*} \bbe_{\infty}\Big(\sum_{k=0}^{T_h^c(\Delta)/{\Delta}-1}{\g(S_k^{(s)}(\Delta))}\Big)  \geq \bbe_{\infty}\Big(\sum_{k=0}^{T/{\Delta}-1}{\g(S_k^{(s)}(\Delta))}\Big).
\end{align*} 
To examine the quantity $V(s,\lambda)$ in (\ref{UnconstrainedOptProb}), consider a sequence of stopping times $(\tau_r^{{(s)}})_{r\geq 0}$ such that $\tau_0^{{(s)}}=0$ and $\tau_r^{{(s)}}:=\inf\{n>\tau_{r-1}^{{(s)}}:S_n^{(s)}(\Delta)\leq 1\}$. 
Consider also the sequences $(\xi_r^{{(s)}})_{r\geq 1}$, $(\eta_r^{{(s)}})_{r\geq 1}$, and $(\omega_r^{{(s)}})_{r\geq 1}$, defined by
\begin{align*}
\xi_r^{{(s)}} := \sum_{k=\tau_{r-1}^{{(s)}}+1}^{\tau_r^{{(s)}}}g(S_k^{(s)}(\Delta)),\;\qquad
\eta_r^{{(s)}} := \tau_r^{{(s)}}-\tau_{r-1}^{{(s)}},\;\qquad
\omega_r^{{(s)}} :=\xi_r^{{(s)}}-\lambda\eta_r^{{(s)}},
\end{align*}
which, by the strong Markov property of $(S_k^{(s)}(\Delta))_{k\geq 0}$, are i.i.d.\ sequences for $0\leq s\leq 1$, while $(\xi_r^{{(s)}})_{r\geq 2}$, $(\eta_r^{{(s)}})_{r\geq 2}$, and $(\omega_r^{{(s)}})_{r\geq 2}$ are i.i.d.\ sequences for $s>1$. We will need the following two lemmas, where the second one identifies the range of values of $s$ and $\lambda$ for which $V({s},\lambda)$ is finite.
\begin{lem}\label{NuMoments}
	All $\bbp_{\infty}$-moments of the stopping time $\tau_1^{{(s)}}$ exist {for any $s\geq 0$}.
\end{lem}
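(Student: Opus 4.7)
The plan is to reduce the question to a first-passage bound for a random walk with negative drift, and then invoke a standard exponential-martingale argument. From the recursive formula (\ref{Sk}), a simple induction shows that on the event $\{\tau_1^{(s)}>k\}$ one has $S_k^{(s)}(\Delta) = \max(s,1)\prod_{i=1}^{k}L_i(\Delta)$, since the indicator $S_j^{(s)}(\Delta)>1$ holds for all $j<k$ on this event. Setting $W_n := \sum_{i=1}^n \log L_i(\Delta)$ and $a := \log(\max(s,1))\geq 0$, this identity translates into
\begin{align*}
\tau_1^{(s)} = \inf\{n \geq 1: W_n \leq -a\}, \qquad \{\tau_1^{(s)} > n\} \subseteq \{W_n > -a\}.
\end{align*}
Under $\bbp_{\infty}$ the variables $(\log L_i(\Delta))_{i\geq 1}$ are i.i.d., being log-likelihood ratios over disjoint $\Delta$-intervals of a L\'evy process. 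Since $\bbe_{\infty}(L_1(\Delta))=1$ and $L_1(\Delta)=e^{U_{\Delta}}$ is not a.s.\ constant -- the explicit form of $U$ in (\ref{Ut}) together with the standing hypothesis $(\sigma^{(0)},b^{(0)},\nu^{(0)})\neq(\sigma^{(1)},b^{(1)},\nu^{(1)})$ and conditions (i)-(iii) forces $U_{\Delta}$ to be genuinely random -- strict Jensen gives $\bbe_{\infty}(\log L_1(\Delta))<0$.

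The second step is to tilt the measure. Consider the log-moment-generating function $\psi(\theta):=\log\bbe_{\infty}(L_1(\Delta)^{\theta})$, which is finite on $[0,1]$ by Lyapunov's inequality, satisfies $\psi(0)=\psi(1)=0$, and is strictly convex on $[0,1]$ (again because $L_1(\Delta)$ is non-degenerate). Hence $\alpha:=-\psi(\theta)>0$ for any fixed $\theta\in(0,1)$, and
\begin{align*}
M_n \;:=\; \exp(\theta W_n+\alpha n) \;=\; \prod_{i=1}^{n}\bigl(L_i(\Delta)^{\theta}e^{\alpha}\bigr)
\end{align*}
is a product of i.i.d.\ unit-mean random variables, hence a positive $\bbp_{\infty}$-martingale with $\bbe_{\infty}(M_n)=1$. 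On $\{\tau_1^{(s)}>n\}$ we have $W_n>-a$ and therefore $M_n\geq e^{-\theta a+\alpha n}$, so
\begin{align*}
1 \;=\; \bbe_{\infty}(M_n) \;\geq\; \bbe_{\infty}\bigl(M_n\,{\bf 1}_{\{\tau_1^{(s)}>n\}}\bigr) \;\geq\; e^{-\theta a+\alpha n}\,\bbp_{\infty}\bigl(\tau_1^{(s)}>n\bigr).
\end{align*}

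Rearranging yields the geometric tail bound $\bbp_{\infty}(\tau_1^{(s)}>n)\leq e^{\theta a-\alpha n}$, which immediately implies that $\tau_1^{(s)}$ has finite exponential -- and in particular all polynomial -- moments, uniformly in $s\geq 0$ on bounded sets. The only genuinely delicate point is verifying strict negativity of $\bbe_{\infty}(\log L_1(\Delta))$, i.e., the non-degeneracy of $U_{\Delta}$: this is the step where the standing assumption that the two generating triplets differ enters in an essential way, via the explicit representation (\ref{Ut}). Once this is in hand, the rest is a textbook exponential change-of-measure bound.
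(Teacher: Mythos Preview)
Your argument is correct and mirrors the paper's: both reduce $\{\tau_1^{(s)}>n\}$ to $\{\prod_{i\le n}L_i(\Delta)>1/\max(s,1)\}$ and then use that $\bbe_\infty(L_1(\Delta)^\theta)<1$ for $\theta\in(0,1)$ to extract a geometric tail---the paper via a direct application of Markov's inequality, you via the equivalent exponential-martingale bound. The only cosmetic difference is that you handle all $s\ge 0$ in one stroke through the offset $a=\log\max(s,1)$, whereas the paper treats $s=0$ first and dispatches $s>1$ with ``done in a similar way.''
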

\begin{proof}
	Consider first the case $s=0$. Note that $a^x\leq(1-x)+xa$ for $a\geq 0$ and $0\leq x\leq 1$, so $\alpha(x):=\bbe_{\infty}((L_{1}(\Delta))^x)\leq 1$, with $L_{1}(\Delta)$ defined in (\ref{Sk}). Since, by assumption, $L_{1}(\Delta)$ is not a constant, there exists $x_0\in(0,1)$ such that $\alpha(x_0)<1$. Then, by Markov's inequality,
	\begin{align*}
	\bbp_{\infty}(\tau_1^{{(0)}}=k)\leq \bbp_{\infty}\Big(\prod_{j=1}^{k-1}L_j(\Delta)>1\Big)=\bbp_{\infty}\Big(\prod_{j=1}^{k-1}(L_j(\Delta))^{x_0}>1\Big)\leq \alpha(x_0)^{k-1},
	\end{align*}
	and all finite moments therefore exist, because
	\begin{align*}
	\bbe_{\infty}\big((\tau_1^{{(0)}})^j\big)=\sum_{k=1}^{\infty}k^j\bbp_{\infty}(\tau_1^{{(0)}}=k)\leq \sum_{k=1}^{\infty}k^j\alpha(x_0)^{k-1}<\infty.
	\end{align*}
	Since $\tau_1^{(s)}=\tau_1^{(0)}$ for $0<s\leq 1$, the result also follows for such $s$, and the case $s>1$ is done in a similar way.
\end{proof}
\begin{lem}\label{optcond}
	Let $\lambda_0:=\bbe_{\infty}(\xi_1^{{(0)}})/\bbe_{\infty}(\tau_1^{{(0)}})$.
	\begin{itemize}
		\item[(i)] If $\lambda_0<\lambda<\infty$, 
		{then $V(s,\lambda)<\infty$ for any $s\geq 0$.}
		\item[(ii)] 
		$V({s},\lambda)\to\infty$, as $\lambda\to\lambda_0^{+}$, for any $s\geq 0$.
	\end{itemize}
\end{lem}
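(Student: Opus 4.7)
My plan is to prove parts (i) and (ii) using the regenerative structure of the CUSUM chain $(S_k^{(s)}(\Delta))$ at the cycle times $\{\tau_r^{(s)}\}$, together with Wald-type identities, and, for (ii), a Poisson-equation decomposition for the ergodic average $\lambda_0$.

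For part (i), I would first treat the case $0 \leq s \leq 1$, where the cycle sequences $(\xi_r^{(s)}, \eta_r^{(s)}, \omega_r^{(s)})$ are i.i.d.\ with $\bbe_\infty\omega_1^{(s)} = (\lambda_0 - \lambda)\bbe_\infty\eta_1^{(s)} < 0$ since $\lambda > \lambda_0$. For any stopping time $T \in \calT(\Delta)$ with $\bbe_\infty T < \infty$, let $R_T := \max\{r \geq 0 : \tau_r^{(s)} \leq T/\Delta - 1\}$ denote the number of completed cycles before the stopping time, and decompose
\begin{align*}
\sum_{k=0}^{T/\Delta - 1}\bigl(g(S_k^{(s)}(\Delta)) - \lambda\bigr) \;=\; (g(s) - \lambda) + W_{R_T}^{(s)} - \bigl(g(S_{\tau_{R_T}^{(s)}}^{(s)}) - \lambda\bigr) + P_T,
\end{align*}
where $W_R^{(s)} = \sum_{j=1}^R \omega_j^{(s)}$ and $P_T$ is the partial last-cycle tail. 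Since $R_T$ is a stopping time on the cycle filtration $(\calF_{\tau_r^{(s)}\Delta})$ with $\bbe_\infty R_T \leq \bbe_\infty T/(\Delta\bbe_\infty\eta_1) < \infty$, Wald's identity yields $\bbe_\infty W_{R_T}^{(s)} = \bbe_\infty R_T\cdot\bbe_\infty \omega_1^{(s)} \leq 0$. One bounds $|P_T| \leq (\|g\|_\infty + |\lambda|)\eta_{R_T + 1}^{(s)}$, uses the boundedness of $g$ (from the assumptions made at the start of the proof of Prop.~\ref{OptProbProp}), and controls $\bbe_\infty \eta_{R_T+1}^{(s)}$ via the inspection-paradox bound combined with Lemma~\ref{NuMoments}, which gives all moments of $\eta_1^{(s)}$ finite. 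This yields $\bbe_\infty M_T \leq C(\lambda, s) < \infty$ uniformly in $T$, hence $V(s, \lambda) < \infty$. For $s > 1$, I would condition on $\tau_1^{(s)}$: on $\{T < \tau_1^{(s)}\Delta\}$ the accumulated reward is bounded in absolute value by $(\|g\|_\infty + |\lambda|)\bbe_\infty\tau_1^{(s)}$, finite by Lemma~\ref{NuMoments}, while on $\{T \geq \tau_1^{(s)}\Delta\}$ the strong Markov property and $S_{\tau_1^{(s)}} \leq 1$ reduce the problem to the previous case.

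For part (ii), the main tool I plan to use is a Poisson-equation decomposition for the CUSUM chain. Since $(S_k)$ is positive Harris recurrent and $\bbe_\pi[g] = \lambda_0$ by the cycle ergodic identity, there should exist a solution $\hat u : [0, \infty) \to \bbr$ of the Poisson equation $\hat u(s) - \bbe_s[\hat u(S_1)] = g(s) - \lambda_0$, and by optional stopping (valid for stopping times of finite mean, via a Foster-Lyapunov drift for the CUSUM chain coming from $\bbe_\infty\log L_k(\Delta) < 0$) one obtains the representation
\begin{align*}
\bbe_\infty\Big[\sum_{k=0}^{T/\Delta - 1}(g(S_k^{(s)}) - \lambda)\Big] \;=\; \hat u(s) - \bbe_\infty[\hat u(S_T^{(s)})] - (\lambda - \lambda_0)\,\bbe_\infty[T/\Delta].
\end{align*}
This reduces (ii) to exhibiting stopping times making $\bbe_\infty[\hat u(S_T^{(s)})] + (\lambda - \lambda_0)\bbe_\infty[T/\Delta]$ arbitrarily negative. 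I would then verify that $\hat u$ is unbounded below on $[0, \infty)$: starting from $S = h$ with $\log h$ large, the expected time to return to $\{S \leq 1\}$ grows linearly in $\log h$ (since the log-CUSUM chain is a Lindley recursion with negative drift), so the accumulated excess cost relative to the ergodic mean diverges to $-\infty$. Choosing $T_{h} := \Delta\cdot\inf\{k \geq 0 : S_k^{(s)} > h\}$, one has $\bbe_\infty[\hat u(S_{T_h}^{(s)})] \to -\infty$ as $h \to \infty$, while $\bbe_\infty[T_h/\Delta]$ can be bounded above polynomially in $h$ via a Foster-Lyapunov argument. Taking $h = h(\lambda) \to \infty$ sufficiently slowly as $\lambda \to \lambda_0^+$, so that $(\lambda - \lambda_0)\bbe_\infty[T_{h(\lambda)}/\Delta] \to 0$, one obtains $V(s, \lambda) \to \infty$.

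The hard part will be the analytic justification in part (ii): constructing $\hat u$ with the required unboundedness below, and applying the optional stopping identity rigorously when $\hat u(S_T)$ is potentially unbounded. A careful Foster-Lyapunov uniform-integrability argument using the geometric drift of the CUSUM chain (ultimately arising from $\bbe_\infty \log L_k(\Delta) < 0$) should make all the steps precise; in principle the argument can also be recast directly in terms of the i.i.d.\ cycle rewards $(\omega_r^{(0)})$ and the excursion lengths $(\eta_r^{(0)})$, which, as $\lambda \downarrow \lambda_0$, become centered and give rise to the logarithmic-type divergence seen explicitly in the proof of Lemma~\ref{LowerBoundLemma}.
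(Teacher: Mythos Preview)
Your part (i) is in the right spirit---both you and the paper exploit the cycle decomposition at the times $\tau_r^{(s)}$---but there are two issues. First, some technical points need care: $R_T$ is not obviously a stopping time on the cycle filtration (the event $\{R_T\le r\}$ depends on $\tau_{r+1}^{(s)}$), and you only treat stopping times with $\bbe_\infty T<\infty$ whereas the supremum defining $V$ ranges over all of $\calT(\Delta)$. Second, and more importantly, the paper does not merely prove $V(s,\lambda)<\infty$; it proves the stronger statement
\[
\bbe_\infty\Big(\sup_{n\ge 1}\Big(\sum_{k=0}^n\big(g(S_k^{(s)}(\Delta))-\lambda\big)\Big)^+\Big)<\infty,
\]
which is precisely the hypothesis needed later to invoke the optimal-stopping Theorem~\ref{opt}. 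The paper obtains this by reducing to $s=0$ (monotonicity), bounding the running maximum of the cycle random walk $\sum_k(\omega_k^{(0)}+\delta)$ (with $\delta>0$ chosen so the drift stays negative) via a classical second-moment bound for suprema of negative-drift random walks, and handling the residual term by a tail bound on $\eta_r^{(0)}$ using the higher moments from Lemma~\ref{NuMoments}. Your Wald argument, even once repaired, does not deliver this $\sup_n$ control.

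For part (ii), your Poisson-equation route is plausible but far heavier than needed, and the steps you yourself flag as ``hard'' (constructing $\hat u$ and verifying it is unbounded below, justifying optional stopping when $\hat u(S_T)$ is unbounded, balancing the growth of $\bbe_\infty T_h$ against that of $|\hat u(S_{T_h})|$) are genuine obstacles that would each require nontrivial work. The paper's argument is essentially the one you allude to in your final sentence and then abandon: restrict attention to stopping times of the form $\tau_{T_R}^{(s)}\Delta$, where $T_R$ is a stopping time on the i.i.d.\ cycle sequence $(\xi_r,\eta_r)$ with $\bbe_\infty T_R<\infty$, so that up to a bounded correction (controlled by Lemma~\ref{NuMoments}) one has $V(s,\lambda)\ge \bbe_\infty\big[\sum_{k}(\xi_k^{(0)}-\lambda\eta_k^{(0)})\big]-C$. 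Letting $\lambda\to\lambda_0^+$ via dominated convergence (for fixed $T_R$), the increments become the mean-zero, nondegenerate variables $\xi_k^{(0)}-\lambda_0\eta_k^{(0)}$, and it is a classical fact that a zero-mean random walk can be stopped to have arbitrarily large expectation. This dispatches (ii) in a few lines with no Poisson equation, no Foster--Lyapunov drift, and no rate-balancing.
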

\begin{proof}
(i) We will show that for $\lambda_0<\lambda<\infty$ and $s\geq 0$,  
		\begin{align}\label{optcondA}
		\bbe_{\infty}\Big(\sup_{n\geq 1}\Big(\sum_{k=0}^n(g(S_k^{(s)}(\Delta))-\lambda)\Big)^+\Big)<\infty,
		\end{align}
		which is a sufficient condition for $V(s,\lambda)$ to be finite (see \cite[p.69]{Shiryayev}). Moreover, it is sufficient to show (\ref{optcondA}) for $s=0$, since $g(S_k^{(s)}(\Delta))$ is non-increasing in $s$. {To that end, first} notice that $\lambda_0$ is finite, since $|\bbe_{\infty}(\xi_1^{{(0)}})|\leq D\bbe_{\infty}(\tau_1^{{(0)}}){<\infty}$ { by Lemma \ref{NuMoments}}, where $D$ is a uniform bound on the function $g$. Next, for a fixed $n\geq 1$, find $r\geq 1$ such that $\tau_{r-1}^{{(0)}}<n\leq\tau_r^{{(0)}}$, so
	\begin{align*}
	\sum_{k=0}^{n}(g(S_k^{{(0)}}(\Delta))-\lambda)\leq \sum_{k=1}^r\omega_k^{{(0)}}+\eta_r^{{(0)}}D',
	\end{align*}
	where $D'=D+|\lambda|$. For $\lambda>\lambda_0$ we have $\bbe_{\infty}(\omega_1^{{(0)}})<0$, and let $\delta>0$ be such that $\bbe_{\infty}(\omega_1^{{(0)}})+\delta<0$. Then,
	\begin{align*}
	\Big(\sum_{k=0}^n(g(S_k^{{(0)}}(\Delta))-\lambda)\Big)^+ 
	&\leq \Big(\sum_{k=1}^r(\omega_k^{{(0)}}+\delta)\Big)^++(\eta_r^{{(0)}}D'-\delta r)^+
	\leq \Big(\sum_{k=1}^r(\omega_k^{{(0)}}+\delta)\Big)^++D'\eta_r^{{(0)}}{\bf 1}(\eta_r^{{(0)}}\geq \delta' r),
	\end{align*}
	where $\delta'=\delta/D'$. Hence,
	\begin{align*}
	\bbe_{\infty}\Big(\sup_{n\geq 1}\Big(\sum_{k=0}^n(g(S_k^{{(0)}}(\Delta))-\lambda)\Big)^+\Big)
	&\leq \bbe_{\infty}\Big(\sup_{r\geq 1}\Big(\sum_{k=1}^r(\omega_k^{{(0)}}+\delta)\Big)^+\Big)
	+D'\sum_{r=1}^{\infty}\bbe_{\infty}(\eta_r^{{(0)}}{\bf 1}(\eta_r^{{(0)}}\geq \delta' r)).
	\end{align*}
	For the first term to be bounded, sufficient conditions are given by $\bbe_{\infty}(\omega_1^{{(0)}})+\delta<0$ and 
	$\bbe_{\infty}\big(\big(\omega_1^{{(0)}}-\bbe_{\infty}(\omega_1^{{(0)}})\big)^2\big)<\infty$ (cf.\ \cite[p.92]{Siegmund}). The first one {is} satisfied, and for the second one we have
	\begin{align*}
	\bbe_{\infty}\big(\big(\omega_1^{{(0)}}-\bbe_{\infty}(\omega_1^{{(0)}})\big)^2\big)
	\leq 2\big(\bbe_{\infty}\big((\omega_1^{{(0)}})^2\big)+\big(\bbe_{\infty}\big(\omega_1^{{(0)}}\big)\big)^2\big)<\infty,
	\end{align*} 
	from the boundedness of $g$ {and} Lemma \ref{NuMoments}. To show that the second term is bounded, notice that since $(\eta_r^{{(0)}})_{r\geq 1}$ is an i.i.d.\ sequence with $\eta_1^{{(0)}}=\tau_1^{(0)}$,
	\begin{align*}
	\sum_{r=1}^{\infty}\bbe_{\infty}\big(\eta_r^{{(0)}}{\bf 1}(\eta_r^{{(0)}}\geq \delta'r)\big)\leq \frac{\bbe_{\infty}\big((\eta_1^{{(0)}})^3\big)}{(\delta')^2}\sum_{r=1}^{\infty}\frac{1}{r^2}<\infty.
	\end{align*}
	(ii) Notice that $V({s},\lambda)$ is non-increasing in $\lambda$, so $\lim_{\lambda\to\lambda_0^{+}}V({ s},\lambda)$ is well defined for all $s\geq 0$. Now let $T_R\geq 1$ be a stopping time measurable with respect to the filtration $(\sigma(\xi_1^{{(s)}},\dots,\xi_r^{{(s)}},\eta_1^{{(s)}},\dots,\eta_r^{{(s)}}))_{r\geq 1}$. {Then, for $0\leq s\leq 1$,}
	\begin{align*}
	\sum_{k=0}^{\tau_{\T_R}^{{(s)}}-1}(\g(S_k^{(s)}(\Delta))-\lambda)\geq\sum_{k=1}^{T_R}(\xi_k^{{(s)}}-\lambda\eta_k^{{(s)}}){\bf 1}_{\{T_R>1\}}-2D'
	\geq\sum_{k=2}^{T_R}(\xi_k^{{(0)}}-\lambda\eta_k^{{(0)}})-(2+\eta_1^{(0)})D',
	\end{align*}
	{since $\xi_k^{{(s)}}=\xi_k^{{(0)}}$ and $\eta_k^{{(s)}}=\eta_k^{{(0)}}$ for $0\leq s\leq 1$. For $s>1$ we similarly have
	\begin{align*}
	\sum_{k=0}^{\tau_{\T_R}^{{(s)}}-1}(\g(S_k^{(s)}(\Delta))-\lambda) 
	&\geq \sum_{k=2}^{T_R}(\xi_k^{{(s)}}-\lambda\eta_k^{{(s)}}){\bf 1}_{\{T_R>1\}}-(2+\eta_1^{{(s)}})D'
	\end{align*}
	and since $\xi_k^{{(s)}}\ed\xi_k^{{(0)}}$ and $\eta_k^{{(s)}}\ed\eta_k^{{(0)}}$ for $k\geq 2$, and by Lemma \ref{NuMoments}, we can  find a constant $K^{(s)}<\infty$ such that}
	\begin{align*}
	V({s},\lambda)\geq\bbe_{\infty}\Big(\sum_{k={2}}^{T_R}(\xi_k^{{(0)}}-\lambda\eta_k^{{(0)}}){\bf 1}_{\{T_R>1\}}\Big)-(2+{K^{(s)}})D'.
	\end{align*}
	Thus, by considering a stopping $T_R$ such that $\bbe_{\infty}(T_R)<\infty$, taking limits with respect to $\lambda$, and applying the dominated convergence theorem, we get
	\begin{align*}
	\lim_{\lambda\to\lambda_0^{+}}V({ s},\lambda)\geq\bbe_{\infty}\Big(\sum_{k=1}^{T_R}(\xi_k^{{(0)}}-\lambda_0\eta_k^{{(0)}})\Big)-(2+{K^{(s)}})D',
	\end{align*}
	where the random variables $\xi_k^{{(0)}}-\lambda_0\eta_k^{{(0)}}$ have zero mean (and are not identically zero), so the right hand side involves the stopped value of a random walk with zero mean. Hence, by \cite[p.27]{Siegmund}, it can be made arbitrarily large by a proper choice of $T_R$. 
\end{proof}

\noindent To conclude the proof, we also need the following standard optimization result (see, e.g., Theorem 4.5 in \cite{Siegmund}): 

\begin{thm}\label{opt}
	Let $(\Omega,\calF,(\calF_k)_{k\geq 0},\bbp)$ be a filtered probability space, and $(Y_k)_{k\geq 0}$ be an adapted process such that $\bbe(\sup_{k\geq 0}Y_k^+)<\infty$.  Let $\calT$ be the set of all stopping times on $\Omega$ with respect to $(\calF_k)_{k\geq 0}$, such that $\bbe(Y_T)$ exists, and let $\calT_k:=\{T\in\calT:\bbp(T\geq k)=1\}$. 
	Then the optimization problem
	\begin{align*}
	\sup_{T\in\calT}\bbe(Y_T),
	\end{align*} 
	is solved by 
	$T_0:=\inf\{k\geq 0:Y_k=\gamma_k\}$,
	where 
	$\gamma_k:= \;\stackrel[T\in\calT_k]{}{\esssup}\bbe(Y_T|\calF_k)$ 
	is the Snell envelope of $(Y_k)_{k\geq 0}$.\footnote{For a family of random variables $(X_i)_{i\in I}$, where $I$ is an index set, defined on a generic probability space $(\Omega,\calF,\bbp)$, the essential supremum $\stackrel[i\in I]{}{\esssup}X_i$ is the smallest random variable that almost surely dominates all members of the family.} 
	\hfill\qed
\end{thm}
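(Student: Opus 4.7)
The plan is to approach this as a standard optimal stopping result via the Snell envelope $(\gamma_k)_{k\geq 0}$, viewed as the minimal supermartingale dominating $(Y_k)_{k\geq 0}$. First I would establish the Bellman-type recursion $\gamma_k=\max(Y_k,\bbe(\gamma_{k+1}|\calF_k))$, justified by the integrability hypothesis $\bbe(\sup_k Y_k^+)<\infty$, which ensures that all the conditional expectations are well defined. From this recursion it is immediate that $(\gamma_k)_{k\geq 0}$ is a supermartingale with $\gamma_k\geq Y_k$ a.s., and moreover that $\gamma_k>Y_k$ precisely on $\{T_0>k\}$, while $\gamma_{T_0}=Y_{T_0}$ on $\{T_0<\infty\}$.

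The key step is to show that the stopped process $(\gamma_{k\wedge T_0})_{k\geq 0}$ is in fact a martingale. On the event $\{T_0>k\}$ the strict inequality $Y_k<\gamma_k$ forces, via the Bellman recursion, the identity $\gamma_k=\bbe(\gamma_{k+1}|\calF_k)$, so $\bbe(\gamma_{(k+1)\wedge T_0}|\calF_k)=\gamma_{k\wedge T_0}$. Taking expectations at stage $k$ and then passing to the limit $k\to\infty$ via dominated convergence (controlled by $\sup_k Y_k^+$ from above and by the integrability of $Y_{T_0}$ from below) yields $\gamma_0=\bbe(\gamma_{T_0})=\bbe(Y_{T_0})$, where the last equality uses that $\gamma_{T_0}=Y_{T_0}$ by the very definition of $T_0$.

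For the optimality of $T_0$, take any other $T\in\calT$. The optional sampling theorem applied to the supermartingale $(\gamma_k)_{k\geq 0}$, together with the integrability assumption to handle unbounded stopping times, gives $\gamma_0\geq\bbe(\gamma_T)\geq\bbe(Y_T)$. Combining with the previous step, $\bbe(Y_T)\leq\gamma_0=\bbe(Y_{T_0})$, so $T_0$ attains the supremum.

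The main obstacle I anticipate is the careful handling of limits when $T_0$ or $T$ may be infinite: the supermartingale/martingale identities must be extended through the limit $k\to\infty$, which requires uniform integrability of the stopped sequences $(\gamma_{k\wedge T_0})_k$ and $(\gamma_{k\wedge T})_k$ — exactly the role played by $\bbe(\sup_k Y_k^+)<\infty$, supplemented by the restriction that $\bbe(Y_T)$ exists so as to bound $Y_T^-$. A secondary technical point is that the essential supremum defining $\gamma_k$ need not be attained by any single $T\in\calT_k$, so realizing the Bellman recursion pathwise requires the standard upwards-directed family argument for essential suprema, producing a countable sequence of stopping times whose conditional expectations increase to $\gamma_k$ almost surely.
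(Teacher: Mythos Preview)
The paper does not actually prove this theorem: it is stated as a standard result with the reference ``see, e.g., Theorem 4.5 in \cite{Siegmund}'' and closed immediately with a \qed. Your proposal outlines exactly the classical Snell-envelope argument that one finds in that reference (Chow--Robbins--Siegmund), so there is nothing to compare against within the paper itself; your sketch is correct and is the expected approach.
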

To use this result we note that (\ref{UnconstrainedOptProb}) can be written as 
\begin{align*}
V(s,\lambda)=\sup_{T\in\calT(\Delta)}\bbe_{\infty}(Y_{T/{\Delta}}^{(s)}),
\end{align*}
where $Y_n^{(s)}:=\sum_{k=0}^{n-1}(g(S_k^{(s)}(\Delta))-\lambda)$, for $n\geq 0$. 
Lemma \ref{optcond}-(i) (Eq.\ \ref{optcondA}) and Theorem \ref{opt} then imply that for $\lambda>\lambda_0$, the problem is solved by $T_{opt}^{(s)}{(\lambda)}:=\inf\{k\geq 0:Y_k^{(s)}=\gamma_k^{(s)}\}$, where $(\gamma_k^{(s)})_{k\geq 0}$ is defined by 
\begin{align*}
\gamma_k^{(s)}:=\;\stackrel[T\in\calT_k(\Delta)]{}{\esssup}\bbe_{\infty}(Y_{T/{\Delta}}^{(s)}|\calF_k),
\end{align*}
and $\calT_k(\Delta):=\{T\in\calT(\Delta):\bbp_{\infty}(T\geq k\Delta)=1\}$. By the homogeneous Markov property of $(S_k^{(s)}(\Delta))_{k\geq 0}$, we can write
$\gamma_k^{(s)}=Y_k^{(s)}+V(S_k^{(s)}(\Delta),\lambda)$, so
\begin{align}\label{Topt}
T_{opt}^{(s)}{(\lambda)}=\inf\{k\geq 0:V(S_k^{(s)}(\Delta),\lambda)=0\}=\inf\{k\geq 0:S_k^{(s)}(\Delta)\geq h_{\lambda}\}=T_{h_{\lambda}}^c(\Delta),
\end{align}
so the CUSUM stopping time $T_{h_{\lambda}}^c(\Delta)$ is optimal, and the barrier is given by
\begin{align*}
h_{\lambda}:=\sup\{u\geq 0:V(u,\lambda)>0\}\geq 0. 
\end{align*}
It is clear that $h_{\lambda}=0$ for $\lambda\geq g(0)$, and $h_{\lambda}=\infty$ for $\lambda\leq\lambda_0$, by Lemma \ref{optcond}-(ii). For $\lambda_0<\lambda<g(0)$ we have $h_{\lambda}<\infty$, since in that case $\bbe_{\infty}(\omega_r^{(s)})=c<0$ for $r\geq 2$, so as $n\to\infty $ the strong law of large numbers shows that $Y_n^{(s)}\to-\infty$, $\bbp_{\infty}$-a.s., which implies $\bbp_{\infty}(T_{opt}^{(s)}{(\lambda)}<\infty)=1$, and thus $h_{\lambda}<\infty$. 
	
The final step of the proof is to show that there exists a $\bar\lambda\in(\lambda_0,g(0))$ such that $h_{\bar\lambda}=h$. To that end, consider the function
\begin{align*}
	b(\lambda) = g(h) - \lambda + \bbe_{\infty}\big(V(\max\{h,1\}L_1(\Delta),\lambda)\big),\quad \lambda>\lambda_0,
\end{align*}
which is continuous in $\lambda$ since $\bbe_{\infty}(Y_{T/{\Delta}}^{(s)})$ is linear in $\lambda$ for every $T\in\calT(\Delta)$, so $V(s,\lambda)$, being the supremum over $T\in\calT(\Delta)$, is convex and therefore continuous in $\lambda$, {for $\lambda>\lambda_0$}. 
Since $b(g(0))=g(h)-g(0)\leq g(h)-g(z_0)<0$ and $b(\lambda)\to\infty$ as $b\to\lambda_0^+$, by Lemma \ref{optcond}-(ii), there exists a $\bar{\lambda}\in(\lambda_0,g(0))$ such that $b(\bar\lambda)=0$.
Moreover, since the Snell envelope satisfies the equation $\gamma_k^{(s)} = \max\{Y_k^{(s)},\bbe_{\infty}(\gamma_{k+1}^{(s)}|\calF_k)\}$, $\bbp_{\infty}$-a.s.\ for $k\geq 0$ (c.f.\ \cite[Ch.\ 4]{Siegmund}), $V(s,\lambda)$ satisfies the equation (c.f.\ \cite[p.69]{Shiryayev})
\begin{align}\label{intEq}
	V(s,\lambda) = \big(g(s) - \lambda + \bbe_{\infty}\big(V(\max\{s,1\}L_1(\Delta),\lambda)\big)\big)^+, 
\end{align}
and since $g(s)$ and $V(s,\lambda)$ are non-increasing in $s$, it follows that $V(s,\bar\lambda)=0$ for $s\geq h$. That is, it is optimal to stop when $S_k^{(s)}(\Delta)\geq h$, which is what we wanted to show. 
{We remark that it is possible for $V(s,\bar{\lambda})=0$ to hold for $s<h$, so $h_{\bar\lambda}<h$. In this case the stopping times $T_{h'}^{c}(\Delta)$ for $h'\in[h_{\bar\lambda},h]$ all optimize $V(s,\bar\lambda)$. That is to say, $T_{h'}^c(\Delta)$ is optimal in the class of stopping times $T\in\calT(\Delta)$ that satisfy the constraint $\bbe_{\infty}(T_{h'}^c(\Delta))=\gamma'$, with $\gamma':=\bbe_{\infty}(T_{h'}^c(\Delta))$. This follows from the fact that if $S_k^{(s)}(\Delta)\in[h_{\bar\lambda},h]$ for some $k\geq 0$, then $g(S_k^{(s)}(\Delta)) - \bar\lambda + \bbe_{\infty}(V(\max\{S_k^{(s)}(\Delta),1\}L_1(\Delta),\bar\lambda))=0$ and $V(S_k^{(s)},\bar\lambda)=0$, so the expected gain from continuing in an optimal way is zero.}
 \hfill\qed

\bibliographystyle{plain}


\end{document}